\documentclass[12pt,a4paper]{amsart}
\usepackage[margin=1in]{geometry}
\usepackage{enumitem}
\usepackage{amsmath}
\usepackage{physics}
\usepackage{amssymb}
\numberwithin{equation}{section}
\usepackage{graphicx}
\usepackage{amssymb}
\usepackage{amsthm}
\usepackage{enumitem}
\usepackage{tikz}
\usetikzlibrary{shapes}
\usepackage{mathrsfs}
\usepackage{hyperref}
\usepackage[capitalise]{cleveref}
\usetikzlibrary{arrows}
\usepackage[utf8]{inputenc}
\usetikzlibrary{calc}
\numberwithin{equation}{section}
\usepackage{mathptmx}
\usepackage{tikz-cd}
\usepackage[T1]{fontenc}

\usepackage{tikz-cd}
\usepackage{lipsum} 
\let\oldpart\part
\renewcommand\part{\newpage\oldpart}

\usepackage[%
]{amsrefs}
\hypersetup{%
	linktoc=all,
	colorlinks=true,
	linkcolor=brown,
	citecolor=cyan,
}
\newcommand{\normord}[1]{\xcentcolon\mathrel{#1}\xcentcolon}
\newcommand{\xcentcolon}{%
  \mathrel{\vbox{\hbox{$:$}\kern.2ex}}%
}

\newtheorem{theorem}{Theorem}[section]
\newtheorem*{theorem*}{Theorem}

\newtheorem{corollary}[theorem]{Corollary}
\newtheorem{lemma}[theorem]{Lemma}

\theoremstyle{definition}
\newtheorem{definition}[theorem]{Definition}

\allowdisplaybreaks

\newcommand{\N}{\mathbb{N}}
\newcommand{\Z}{\mathbb{Z}}

\newcommand{\R}{\mathbb{R}}

\providecommand{\customgenericname}{}
\newcommand{\newcustomtheorem}[2]{%
	\newenvironment{#1}[1]
	{%
		\renewcommand\customgenericname{#2}%
		\renewcommand\theinnercustomgeneric{##1}%
		\innercustomgeneric
	}
	{\endinnercustomgeneric}
}
\newcustomtheorem{customthm}{Theorem}

\def \ssq{\subseteq}
\def \vac {\mathbf{1}}
\def\sl{\sl}

\def \End {\mathrm{End}}

\def \wt {\mathrm{wt}}

\def\L{\mathcal{L}}

\def\spn{\mathrm{span}}

\def\sl{\mathfrak{sl}_2}
\def\hatsl{\widehat{\sl}}

\def\<{\langle}
\def\>{\rangle}

\def\L{\mathsf{L}}
\def\R{\mathsf{R}}
\def\NN{\mathsf{N}}

\def\U{\scrU}

\def\L{\mathsf{L}}
\def\sR{\mathsf{R}}
\def\U{\mathsf{U}}
\def\A{\mathsf{A}}

\def\scrU{\mathscr{U}}

\begin{document}
	\title{On the strong unital property for the affine VOAs}
	
	\author{Angela Cai}
	\address{Department of Mathematics, University of Pennsylvania, Philadelphia, PA, 19104}
	\email{caian@sas.upenn.edu}

\begin{abstract}
Representations of vertex operator algebras $V$ (VOAs) have numerous applications, including the construction of sheaves of conformal blocks on moduli spaces of curves. For a $V$-module $W = \oplus W_d$, a sequence of associative algebras $\mathfrak{A}_d$ acts on each graded component $W_d$. When these $d${\tiny{th}}-mode transition algebras $\mathfrak{A}_d$ are strongly unital - meaning they are unital with units acting as the identity on $W_d$ - the associated sheaves of conformal blocks are vector bundles rather than merely coherent sheaves. This strong unital property, while difficult to verify in practice, has other important implications as well. In this paper, we construct explicit strong units for $L_{\hatsl}(1,0)$,  the simple affine VOA for $\mathfrak{sl}_2$ at level $1$, and establish that mode transition algebras for universal affine VOAs for $\mathfrak{sl}_2$ are never strongly unital at any level $k$ not equal to the critical level $-2$. 
\end{abstract}
	
	\maketitle
	
	
	\section{Introduction}
    Representations of a vertex operator algebra $V$ (a VOA) are important in mathematical physics and in an array of areas of mathematics. For instance, admissible $V$-modules give rise to sheaves of coinvariants and conformal blocks on moduli of curves. The mode transition algebra $\mathfrak{A}$ is an invariant of a VOA consisting of a direct sum of associative algebras $\mathfrak{A}_d$, $d\in \mathbb{N}$, constructed as quotients of the universal enveloping algebra of a VOA \cites{DGK24, DGK25}. 
The $d$-th mode transition algebras $\mathfrak{A}_d$ act on the degree $d$ component $W_d$ of any admissible  $V$-module  $W=\oplus W_d$.  If  $\mathfrak{A}_d$ admits a unit $\mathscr{I}_d$ and 
$\mathscr{I}_d$ acts as the identity on $W_d$ for all admissible $V$-modules $W$, then one says that $\mathfrak{A}$ is strongly unital (see also \cref{eq:defstrongunit} and \cref{strongUnitThm}). 

There are a number of applications if $\mathfrak{A}$ is strongly unital.  For instance, coherent sheaves of coinvariants and conformal blocks defined by $V$-modules satisfy a property called smoothing if and only if $\mathfrak{A}$ is strongly unital.  If they are coherent and satisfy smoothing, then these sheaves form vector bundles \cite{DGK24}.  Moreover, $\mathfrak{A}$ is strongly unital if and only if the category of admissible $V$-modules is equivalent to the category of modules over Zhu's algebra \cite{GL26}.

The mode transition algebras for rational VOAs are known to have this strong unital property, although the actual units have not been given by a concrete formula. One also knows that strong units exist for some non-rational VOAs, via direct constructions of their units \cite{DGK24}.  Examples, even in the rational case, could lead to a better understanding of which mode transition algebras may have this strong unital property in the non-rational case.

Towards this end, here in \cref{strongUnitThm} we give an explicit formula for the strong units for the $d$-th mode transition algebras for the simple affine VOA for $\mathfrak{sl}_2$ at level $1$. This is arguably the most
basic example of a rational VOA \cites{FK80,FZ92}. One reason for the lack of examples mentioned above is that constructing strong units requires knowing all the relations of the universal enveloping algebra of the VOA, which we determine in this paper for $L_{\hatsl}(1,0)$.

Mode transition algebras of the non-rational Heisenberg and bosonic ghost VOAs are known to have strong units \cites{AB23a, AB23b, DGK24, BFOHY25}. These results were proved by construction. It is therefore natural to ask whether the strong unital property holds for mode transition algebras of $V_{\hatsl}(k,0)$, the universal affine or the vacuum module VOAs at level $k \in \mathbb{C}$, which  are not rational. Here in \cref{noStrongUnitThm}, we show that if the mode transition algebras for these VOAs admit units, then such units will never have the strong unital property  for any $k \in \mathbb{C}$ with $k$ different from the critical level $-2$.

To give more details about these results and to give an outline of the proofs, we now introduce some notation. 

Let $\scrU=\scrU(V)$ be the (completed) universal enveloping algebra associated to a VOA $V$ (see \cref{section2.2} and \cref{UEA}). By definition, $\scrU$ is a topological associative graded algebra with left (resp. right) neighborhood basis $\NN_\L^n \scrU=\scrU\cdot \scrU_{\leq -n-1}$ (resp. $\NN^n_\R \scrU=\scrU_{\geq n+1}\cdot \scrU$) for $n\in \N$. The generalized Verma module functor \cites{DLM1,DGK25}, $\Phi^\L: \mathsf{Mod}(\A)\rightarrow \mathsf{Adm}(V)$ is defined by $ S\mapsto \Phi^\L(S)=(\scrU/\NN^1_\L\scrU)\otimes_{\scrU_0} S$ and $\Phi^\R: \mathsf{Mod}(\A)\rightarrow \mathsf{Adm}(V)$ is defined by $ S\mapsto \Phi^\R(S)=S \otimes_{\scrU_0} (\scrU/\NN^1_\R\scrU)$. The mode transition algebra $\mathfrak{A}$ is the following tensor product:
\[ \mathfrak{A}=\Phi^\R(\Phi^\L(\A))=(\scrU/\NN^1_\L\scrU)\otimes_{\scrU_0} \A\otimes_{\scrU_0} (\scrU/\NN^1_\R\scrU).\] Here $\A$ is the Zhu algebra of $V$ \cite{Z}, which is an important associative algebra attached to a VOA. Since $\scrU/\NN^1_\L\scrU$ (resp. $\scrU/\NN^1_\R\scrU$) is $\Z_{\geq 0}$ (resp. $\Z_{\leq 0}$) graded, the mode transition algebra $\mathfrak{A}$ is bi-graded $\mathfrak{A}=\bigoplus_{m,n\in \N} \mathfrak{A}_{m,-n}$, where $\mathfrak{A}_{m,-n}=(\scrU/\NN^1_\L\scrU)_m\otimes_{\scrU_0} \A\otimes_{\scrU_0} (\scrU/\NN^1_\R\scrU)_{-n}$. The bi-graded piece 
\begin{equation}
\mathfrak{A}_d=\mathfrak{A}_{d,-d}=\scrU_d/\NN^1_\L\scrU_d\otimes_{\scrU_0} \A\otimes_{\scrU_0} \scrU_{-d}/\NN^1_\R\scrU_{-d}
\end{equation} 
is called the $d$-th mode transition algebra. The associative product
\[
\star: \mathfrak{A} \times \mathfrak{A} \to \mathfrak{A}, \quad (\alpha_i \otimes x \otimes \beta_{-j}, \alpha_k \otimes y \otimes \beta_{-l}) \mapsto \alpha_i \otimes x(\beta_{-j} \circledast \alpha_k)y \otimes \beta_{-l} 
\] where $$\circledast: \scrU_{-d}/\NN^1_\R\scrU_{-d}\times \scrU_d/\NN^1_\L\scrU_d \rightarrow \A=\scrU_0/\NN^1_\L\scrU_0,\quad (\alpha,\beta)\mapsto [\alpha\beta]_0$$
 makes $\mathfrak{A}_{d,0}=(\scrU/\NN^1_\L\scrU)_d\otimes_{\scrU_0} \A$ (resp. $\mathfrak{A}_{0,-d}=\A\otimes_{\scrU_0} (\scrU/\NN^1_\R\scrU)_{-d}$) a left (resp. right) module over $\mathfrak{A}_d$. An element $\mathscr{I}_d\in \mathfrak{A}_d$ is called a strong unit if it satisfies 
\begin{equation}\label{eq:defstrongunit}
\mathscr{I}_d\star \alpha=\alpha,\quad \beta\star \mathscr{I}_d=\beta,\quad \mathrm{for}\quad \alpha\in \mathfrak{A}_{d,0},\ \beta\in \mathfrak{A}_{0,-d}. 
\end{equation}

The first part of this paper proves that the mode transition algebras of $V_{\hatsl}(k,0)$ do not admit strong units at any level $k \neq -2$. To prove this theorem, we give a simpler description of the mode transition algebras by utilizing the following isomorphism of graded topological algebras \cite{FZ92}*{Theorem 2.4.3}: 
\begin{equation}\label{isomPhi}
\scrU(V_{\hat{\mathfrak{g}}}(k,0))\overset{\phi}\cong \widetilde{U}(\hat{\mathfrak{g}}, k),\end{equation}
where $\widetilde{U}(\hat{\mathfrak{g}}, k)$ is the quotient of a completion of the universal enveloping algebra of the affinization $\hat{\mathfrak{g}}$ (see \cref{sec3.2}). From now on, denote a homogeneous element $a \otimes t^n \in U(\hat{\mathfrak{g}})$ as $a(n)$.

 The conditions for admitting a strong unit impose strong constraints. Namely, we show that if the strong unit 
$\mathscr{I}_1\in \mathfrak{A}_1$ exists for $V_{\hatsl}(k,0)$, then it must take the form $\mathscr{I}_1 = \sum_{\alpha, \beta} \lambda_{\alpha, \beta} \alpha(-1) \otimes x_{\alpha, \beta} \otimes \beta(1)$, where $\alpha,\beta \in \{e,h,f\}$. We prove that the set $\{e(1) \otimes 1, f(1) \otimes 1, h(1) \otimes 1\}$ is a linearly independent $\A$-basis for $\mathfrak{A}_{0, -1}$ (see \cref{linearInd}). We then use a system of equations from the strong unital property to derive a contradiction with the coefficients $\lambda_{\alpha,\beta}$. More specifically, we show the following:

\begin{customthm}{A}[\cref{noStrongUnitThm}]\label[theorem]{mainA}
    The vacuum module VOA $V_{\hatsl}(k,0)$ does not satisfy the strong unital property for all non-critical level $k \in \mathbb{C}$ with $k \neq -2$.
\end{customthm}

The simple affine VOA $L_{\widehat{\mathfrak{sl}}_2}(k,0)$ is a quotient of the universal affine VOA $V_{\widehat{\mathfrak{sl}}_2}(k,0)$ by the unique maximal submodule.  This quotient is rational, and hence it is known that the mode transition algebras satisfy the strong unital property \cite{DGK24}.  In other words, the constraints which lead to a contradiction for the universal affine VOA are resolved when taking a quotient.

We devote the second part of the paper to constructing the strong units for the $d$th-mode transition algebras of $L_{\hatsl}(1,0)$. We must first determine the spanning set and relations for the universal enveloping algebra of $L_{\hatsl}(1,0)$. While these formulas are expected, they have not appeared in the literature. To prove this result, we give a simpler description of the mode transition algebras by proving the following isomorphism of graded topological algebras:


\begin{customthm}{B}[\cref{Uofsl2}]\label[theorem]{mainB}
There exists a continuous isomorphism between topological associative algebras
\[
\tilde{\psi}:
\scrU(L_{\widehat{\mathfrak{sl}}_2}(1,0))
\overset{\cong}\to 
\widetilde{U}(\widehat{\mathfrak{sl}}_2, 1) / \overline{\langle e(-1)e(-1) \rangle},
\]
where $ \overline{\langle e(-1)e(-1)\rangle}$ is the closure of the two-sided ideal generated by $e(-1)e(-1)\in \widetilde{U}(\widehat{\mathfrak{sl}}_2, 1)$. 
\end{customthm}
 To prove this theorem, we first show that $e(m)e(n)=0$ for all $m,n\in \mathbb{Z}$ in the quotient algebra $\widetilde{U}(\widehat{\mathfrak{sl}}_2, 1) / \overline{\langle e(-1)e(-1)\rangle}$ (see \cref{baseCase,allZero}). These relations are necessary to show that the isomorphism from  $\cref{isomPhi}$ descends to an isomorphism $\tilde{\psi}$ between their respective quotients.

As a Corollary of \cref{mainA}, we have a dense set of spanning elements in $\scrU(L_{\widehat{\mathfrak{sl}}_2}(1,0))$. The elements of $\scrU(L_{\widehat{\mathfrak{sl}}_2}(1,0))$ are spanned by monomials of length at most two by applying the appropriate Lie bracket action to the ideal generated by $e(-1)e(-1)$.

\begin{customthm}{C}[\cref{relationsforUsl2}]\label[theorem]{main:C}
   The following subset is dense in the topological associative algebra $\scrU(L_{\widehat{\mathfrak{sl}}_2}(1,0))$, where we use the same notation for elements in the quotient: 
\[
\operatorname{span}\bigl\{1,\  e(m),\ f(n),\ h(r),\ h(n_1)h(n_2) \,\bigm|\, m,n,r,n_1,n_2 \in \mathbb{Z}\}.
\] 
Moreover, we have the following product relations among  $e(m), f(n)$, and $h(r)$: 
\begin{equation}\label{eq:rel}
    \begin{aligned}
& e(x)e(y)=f(x)f(y)=0, && e(x)h(y) = - e(x+y),\\
&h(x)f(y) = -f(x+y), && h(x)h(y) + h(x+y) = 2e(r)f(x+y-r),
    \end{aligned}
\end{equation}
for all $x,y,r\in \Z$. 
\end{customthm}

 Let  $\scrU=\scrU(L_{\widehat{\mathfrak{sl}}_2}(1,0))$. When we pass to the Zhu algebra $\A(L_{\widehat{\mathfrak{sl}}_2}(1,0))=\scrU_0/\NN^1_\L\scrU_0$ \cites{FZ92,H17}, the relations from \cref{eq:rel} reduce to the following relations: 
\[e^2=f^2=0,\quad eh=-e,\quad hf=-f,\quad h^2+h=2ef. \] allowing us to denote $a(0)$ by $a$ for $a\in \mathfrak{sl}_2$. These are exactly the defining relations in $\A(L_{\widehat{\mathfrak{sl}}_2}(1,0))\cong U(\mathfrak{sl}_2)/\langle e^2\rangle$. Therefore, \cref{main:C} can be viewed as an affine generalization of the Zhu algebra isomorphism for positive and integral level affine VOAs $\A(L_{\hat{\mathfrak{g}}}(k,0))\cong U(\mathfrak{g})/\langle e_{\theta}^{k+1}\rangle$  \cite{FZ92}*{Theorem 3.1.2}.

With \cref{main:C}, we can rewrite  $(\scrU/\NN^1_\L\scrU)_d$ and $ (\scrU/\NN^1_\R\scrU)_{-d}$ in simpler forms 
\begin{align*}
(\scrU/\NN^1_\L\scrU)_d&=\spn\{e(-d), f(-d), h(-d), h(-n)h(-m): m,n>0,\ m+n=d\},\\
(\scrU/\NN^1_\R\scrU)_{-d}&=\spn\{e(d), f(d), h(d), h(n)h(m): m,n>0,\ m+n=d\}.
\end{align*} where we denote the equivalence class $a+\NN^1_\L\scrU\in \scrU/\NN^1_\L\scrU$ by $a$. Consider the following element in $\mathfrak{A}_d$ of the affine VOA $L_{\widehat{\mathfrak{sl}}_2}(1,0)$: 
\begin{equation}\label{eq:strongunitaffine}
    \begin{aligned}
  \mathscr{I}_d &= \frac{1}{3}e(-d) \otimes 1 \otimes f(d) + \frac{1}{3} f(-d) \otimes 1 \otimes e(d) + \frac{1}{6} h(-d) \otimes 1 \otimes h(d) \\
  &\ +  \sum_{n,m > 0,  n + m = d} \frac{1}{\lambda_{n,m}}h(-n)h(-m) \otimes 1 \otimes h(n) h(m),
    \end{aligned}
\end{equation}
where $1$ is the identity of the Zhu algebra $\A$ and the coefficients $\lambda_{n,m}$ are given by 
\[
\lambda_{n,m} =
\begin{cases}
4d, & \text{if } d \text{ is even and } n=m=\dfrac{d}{2},\\[6pt]
2d, & \text{otherwise.}
\end{cases}\]

 Our last  theorem proves that the element $\mathscr{I}_d$ given by  \cref{eq:strongunitaffine} satisfies the strong unital property (see \cref{strongUnitThm}). The proof uses a detailed description of the relations in the enveloping algebra $\scrU(L_{\widehat{\mathfrak{sl}}_2}(1,0))$ given by \cref{main:C} to
allow a direct verification that the proposed element $\mathscr{I}_d$ from \cref{eq:strongunitaffine} acts as a two-sided strong unit (see \cref{leftUnit,rightUnit}). 
\begin{customthm}{D}[\cref{strongUnitThm}]\label{main:D}
Let    $\mathfrak{A}_d$ be the $d$-th mode transition algebra of the affine VOA $L_{\widehat{\mathfrak{sl}}_2}(1,0)$. Then $\mathscr{I}_d$ given by \cref{eq:strongunitaffine} is a strong unit of $\mathfrak{A}_d$. 
\end{customthm}

The paper is organized as follows.
Section 2 reviews preliminary definitions for vertex operator algebras, Frenkel-Zhu’s construction of the universal enveloping algebra $U(V)$, and the
filtrations/completions needed to define the mode transition algebras.
Section 3 treats the vacuum module VOAs $V_{\widehat{\mathfrak{sl}}_2}(k,0)$: we review the definition of
the vacuum module VOA, describe the corresponding enveloping algebra, and prove  \cref{noStrongUnitThm}.
Finally, Section 4 is devoted to the simple affine VOA $L_{\widehat{\mathfrak{sl}}_2}(1,0)$: we prove the
universal enveloping algebra isomorphism (see \cref{Uofsl2}) and the key relations (see \cref{relationsforUsl2}), and then use them to construct the strong units in \cref{strongUnitThm}.

\section*{Acknowledgements}
The author would like to greatly thank her research mentor Jianqi Liu for patiently introducing the problem, providing relentless support, and helping revise the paper. The author is incredibly grateful for Angela Gibney's mentorship, comments, and assistance in writing the introduction. This research was supported by the University of Pennsylvania's Center for Undergraduate Research and Fellowships (CURF).

\section{Preliminaries}

	
\subsection{Basics of VOAs} We begin by recalling the definition of vertex operator algebras (VOA) of CFT-type and Zhu's algebras, and  refer the reader to the classical texts \cites{FLM,FHL,DL,LL04,FZ92,Z,K} for more details. These notions will later be used in the construction of the mode transition algebra associated with a VOA.

\begin{definition}
A vertex operator algebra of CFT-type $(V, Y, \mathbf{1}, \omega)$ consists of a $\mathbb{N}$–graded vector space $V = \bigoplus_{n \in \mathbb{N}} V_{(n)}$ called the state equipped with a linear map (state-field correspondence) \[
        Y(\cdot, z): V \to \End(V)[[z, z^{-1}]], 
        \qquad 
        a \mapsto Y(a, z) = \sum_{n \in \mathbb{Z}} a_n z^{-n-1};
    \]  with 

\begin{enumerate}
    \item For all $n \in \mathbb{Z}$, 
    $\dim V_{(n)} < \infty$ and $\dim V_{(0)} = 1$.
    \item a distinguished vector $\mathbf{1} \in V_{(0)}$ called the vacuum vector
    \item a distinguished vector $\omega \in V_{(2)}$ called the conformal vector
    \item (operators are fields) For all $a, b \in V$, $a_n b = 0$ for sufficiently large $n >> 0$.
\end{enumerate}
satisfying the following axioms:

\begin{enumerate}

    \item[(1)] Vacuum property:
    \[
        Y(\mathbf{1}, z) = \mathrm{Id}_V.
    \]

    \item[(2)] Creation property:
    \[
        Y(a, z)\mathbf{1} = a + O(z)
    \]

    \item[(3)] Weak commutativity:  For all $a,b \in V,$ there exists  $N \in \mathbb{N}$ such that
\[
(z - w)^N \,[\, Y(a,z),\, Y(b,w) \,] = 0
\quad \text{in } \End(V)[[z, w]].
\]
    \item[(4)]  Virasoro structure: Denote $Y(\omega, z) = \sum_{n \in \mathbb{Z}} \omega_{n}z^{-n-1}$ and 
    write $L(n) = \omega_{n+1}$. There exists a constant $c_V \in \mathbb{C}$ such that  
    for all $m,n \in \mathbb{Z}$,
    \[
        [L(m), L(n)]
        = (m-n)L(m+n)
        + \frac{1}{12}(m^3 - m)\delta_{m+n,0}\, c_V .
    \] where
    \[
        L(0)a = n a \qquad \text{for all } a \in V_{(n)},\; n \in \mathbb{Z}.
    \] 
    \[
        Y(L(-1)a, z) = \frac{d}{dz} Y(a, z) \qquad \text{for all } a \in V.
    \]
\end{enumerate}
\end{definition}

\begin{lemma}[Jacobi identity]
     For $a, b \in V$ and all $m,n \in \mathbb{Z}$,
    \begin{equation}\label{eq:jacobiEq}
    (a_{(n)}b)_{(m)}= \sum_{j \ge 0} (-1)^j \binom{n}{j} 
\left( a_{(n-j)} b_{(m+j)} 
- (-1)^{n+j} \, b_{(m+n-j)} a_{(j)} \right)
\end{equation}

\end{lemma}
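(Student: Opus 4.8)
The plan is to recognize the displayed identity as the \emph{iterate} (equivalently, associativity) component of the formal Jacobi identity for $V$, and to obtain it by extracting a single bigraded coefficient from that identity after taking a residue in one variable.

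First I would promote the axioms listed above to the generating-function Jacobi identity. With $\delta(x)=\sum_{k\in\mathbb{Z}}x^{k}$ and the convention that every binomial $(z_i-z_j)^{k}$ is expanded in nonnegative powers of the \emph{second} variable, the assertion is
\[
z_0^{-1}\delta\!\left(\frac{z_1-z_2}{z_0}\right)Y(a,z_1)Y(b,z_2)-z_0^{-1}\delta\!\left(\frac{z_2-z_1}{-z_0}\right)Y(b,z_2)Y(a,z_1)=z_2^{-1}\delta\!\left(\frac{z_1-z_0}{z_2}\right)Y\!\big(Y(a,z_0)b,z_2\big).
\]
That this follows from weak commutativity (axiom (3)) together with the creation property (axiom (2)) and the $L(-1)$-derivative property (axiom (4)) is the standard equivalence-of-axioms theorem for vertex (operator) algebras: weak commutativity is locality, locality together with the vacuum/creation and translation-covariance properties yields weak associativity, and locality plus weak associativity is equivalent to the Jacobi identity. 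I would invoke this from \cites{FLM,FHL,LL04} rather than reprove it, as it is independent of the rest of the paper.

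Next I would extract the coefficient indexing $(a_{(n)}b)_{(m)}$. Applying $\operatorname{Res}_{z_1}$ to the identity removes the $z_1$-dependence on the right, since $\operatorname{Res}_{z_1} z_2^{-1}\delta\!\left(\frac{z_1-z_0}{z_2}\right)=1$; the right-hand side becomes $Y(Y(a,z_0)b,z_2)=\sum_{n,m\in\mathbb{Z}}(a_{(n)}b)_{(m)}\,z_0^{-n-1}z_2^{-m-1}$, whose $z_0^{-n-1}z_2^{-m-1}$-coefficient is exactly $(a_{(n)}b)_{(m)}$. On the left I would expand the two $\delta$-functions, substitute the finite binomial expansions $(z_1-z_2)^{n}=\sum_{j\ge0}\binom{n}{j}z_1^{\,n-j}(-z_2)^{j}$ and $(z_2-z_1)^{n}=\sum_{j\ge0}\binom{n}{j}z_2^{\,n-j}(-z_1)^{j}$, and take $\operatorname{Res}_{z_1}$ termwise; the residue in $z_1$ selects a single mode $a_{(\cdot)}$ out of $Y(a,z_1)$. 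Reading off the coefficient of $z_0^{-n-1}z_2^{-m-1}$, the first summand yields the terms $\sum_{j\ge0}(-1)^j\binom{n}{j}a_{(n-j)}b_{(m+j)}$ and the second yields the $b_{(m+n-j)}a_{(j)}$ terms, assembling into \cref{eq:jacobiEq}.

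The conceptually heavy input is the passage from weak commutativity to the full Jacobi identity, which I would cite as known. The genuinely delicate part of the computation I carry out is the formal bookkeeping: keeping straight in which variable each binomial is expanded, and tracking the signs contributed by $(-z_1)^{j}$, $(-z_2)^{j}$ and the factor $(-z_0)^{-k}=(-1)^{k}z_0^{-k}$, so that the powers of $-1$, the binomial coefficients $\binom{n}{j}$, and the shifted mode indices $m+j$ and $m+n-j$ all land precisely as in the statement. To keep the signs manageable I would first fix the outer index by isolating the coefficient of $z_0^{-n-1}$ (which forces the $\delta$-expansion index to equal $n$), and only afterward extract the $z_2^{-m-1}$-coefficient, so that each residue reduces to a single binomial sum.
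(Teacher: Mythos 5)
Your strategy --- passing to the generating-function Jacobi identity via the equivalence-of-axioms theorem, then applying $\operatorname{Res}_{z_1}$ and reading off the $z_0^{-n-1}z_2^{-m-1}$-coefficient --- is the standard route, and since the paper states this lemma without proof (it is recalled from the classical references), the approach itself is not at issue. The genuine problem is your final claim that the coefficients ``assemble into'' \cref{eq:jacobiEq}: they do not. Carrying out the bookkeeping you describe, the first term indeed yields $\sum_{j\ge0}(-1)^j\binom{n}{j}a_{(n-j)}b_{(m+j)}$, but the second term, expanded as
\[
z_0^{-1}\delta\Bigl(\tfrac{z_2-z_1}{-z_0}\Bigr)Y(b,z_2)Y(a,z_1)
=\sum_{k}(-1)^k(z_2-z_1)^k z_0^{-k-1}\,Y(b,z_2)Y(a,z_1),
\qquad
(z_2-z_1)^k=\sum_{j\ge0}\binom{k}{j}(-1)^j z_2^{k-j}z_1^{j},
\]
contributes $(-1)^{n}\sum_{j\ge0}(-1)^{j}\binom{n}{j}\,b_{(m+n-j)}a_{(j)}$ at the relevant coefficient, so the identity the computation actually proves is
\[
(a_{(n)}b)_{(m)}=\sum_{j\ge0}(-1)^{j}\binom{n}{j}\Bigl(a_{(n-j)}b_{(m+j)}-(-1)^{n}\,b_{(m+n-j)}a_{(j)}\Bigr),
\]
in which the net coefficient of $b_{(m+n-j)}a_{(j)}$ is $-(-1)^{n+j}\binom{n}{j}$. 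In \cref{eq:jacobiEq} as printed, that net coefficient is $(-1)^{j}\binom{n}{j}\cdot\bigl(-(-1)^{n+j}\bigr)=-(-1)^{n}\binom{n}{j}$, which differs by $(-1)^{j}$. So your write-up either contains a compensating sign slip or asserts a match at precisely the step you yourself flagged as the delicate one.

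This mismatch cannot be repaired by more careful bookkeeping, because \cref{eq:jacobiEq} as printed is false. Take the Heisenberg VOA with $[\alpha(r),\alpha(s)]=r\delta_{r+s,0}$ and $a=b=\alpha(-1)\vac$, $n=1$, $m=-1$. Then $(a_{(1)}b)_{(-1)}=\vac_{(-1)}=\mathrm{Id}$ by the vacuum axiom, and the corrected formula above gives $\alpha(1)\alpha(-1)-\alpha(-1)\alpha(1)=[\alpha(1),\alpha(-1)]=\mathrm{Id}$, whereas \cref{eq:jacobiEq} gives $\alpha(1)\alpha(-1)+\alpha(-1)\alpha(1)$, which acts on $\alpha(-1)\vac$ as multiplication by $3$. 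The printed lemma is also inconsistent with the paper's own later uses of the identity: both the expansion of $(a(n)B)_{[m]}$ in Section 3.3 and the induction step in the proof of \cref{Uofsl2} put the coefficient $(-1)^{n+j}\binom{n}{j}$ on the second sum with no additional $(-1)^{j}$ prefactor, which agrees with the corrected formula and not with \cref{eq:jacobiEq}. The right conclusion of your derivation is therefore the corrected identity --- replace $(-1)^{n+j}$ by $(-1)^{n}$ inside the parenthesis of \cref{eq:jacobiEq} --- and a complete write-up should end by flagging that typo rather than claiming agreement with the statement as written.
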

 We say an element $a \in V$ is homogeneous with wt$(a) = n$ if $a \in V_{(n)}$ in the grading. To define Zhu's algebra $\A(V)$ \cite{Z}, we define the two operators: For $a \in V$ homogeneous and $b \in V$,
\[
a * b = \operatorname{Res}_{z} \left( \frac{(1+z)^{\mathrm{wt}(a)}}{z} Y(a,z)b \right)
    = \sum_{i=0}^{\infty} \binom{\mathrm{wt}(a)}{i} a_{i-1} b,
\]
\[
a \circ b = \operatorname{Res}_{z} \left( \frac{(1+z)^{\mathrm{wt}(a)}}{z^{2}} Y(a,z)b \right)
    = \sum_{i=0}^{\infty} \binom{\mathrm{wt}(a)}{i} a_{i-2} b
\] and extend this operation linearly to all $V$. 

\begin{definition}
    Define $O(V)$ to be the linear span of all $a \circ b$ where $a, b \in V$ and let Zhu's algebra $\A(V) = V/O(V)$. The algebra $\A(V)$ equipped with the $*$ operation is an associative algebra  called the Zhu algebra.
\end{definition}

\subsection{Universal enveloping algebras and mode transition algebras}\label{section2.2}

By following \cite{DGK25}, we will introduce graded and split filtered completions of the universal enveloping algebra, which will be central to constructing mode transition algebras and its strong unital property.

\begin{definition} Let $V$ be a VOA and define the Borcherd's Lie algebra of
$V$ as the Lie algebra
\[
L(V) := \bigl(V \otimes \mathbb{C}((t))\bigr)\big/\Im \partial,
\]
where
\[
\partial := L(-1)\otimes 1 + 1 \otimes \frac{d}{dt}.
\]
\end{definition}
For $A \in V$ and $n \in \mathbb{Z}$ we denote by $A_{[n]}$ the image of
$A \otimes t^n$ in $L(V)$. An element $A_{[n]}$ of Borcherds' Lie algebra is meant to model the $n$-th
Fourier coefficient of the corresponding field $Y(A,z)$. Consequently, it is
natural to equip Borcherds' Lie algebra with the Lie bracket
\[
[A_{[m]}, B_{[n]}]
\;:=\;
\sum_{\ell \ge 0} \binom{m}{\ell} \bigl(A_\ell B\bigr)_{[m + n - \ell]} .
\]  

\begin{definition} For a general spanning element $a_{[n]}=a\otimes t^n+ \Im \partial \in L(V)$ with $a\in V$ homogeneous, define 
	\begin{equation}\label{eq:degofLV}
		\deg (a_{[n]}):=\wt a-n-1. 	\end{equation}
    
\end{definition}

We'll mainly adopt the notation from \cite{DGK25}*{Section 2} to present filtered completions of the universal enveloping algebra. Denote $U(L(V))$ by $\U$ for short. The degree of elements in Borcherds' Lie algebra $L(V)$  gives rise to a graded algebra structure on $\U$:
$$
		\U=\bigoplus_{d\in \Z} \U_d,\quad \U_d=\spn\left\{a^1_{[n_1]}\dots a^r_{[n_r]}\in \U: \sum\limits_{i=1}^{r}\left(\wt a^i-n_i-1\right)=d \right\}.
$$
	Let $\U_{\leq -n}=\sum_{d\leq -n} \U_d$, which makes $\U$ a split-filtered associative algebra $\U=\bigcup_{n\in \Z} \U_{\leq -n}$. Define
	\begin{equation}\label{eq:nei}
		\NN^n_\L \U=\U\cdot \U_{\leq -n}=\U\cdot L(V)_{\leq -n},\quad 	\NN^n_\sR \U=\U_{\geq n}\cdot \U=L(V)_{\geq n}\cdot \U.
	\end{equation}
	where $L(V)_{\leq -n}=\spn\{a_{[k]}\in L(V): \deg(a_{[k]})\leq -n\}$ \cite{DGK25}*{Lemma 2.4.2}. Similarly, define $L(V)_{\geq n} = \spn \{ a_{[k]} \mid \deg(a_{[k]})\geq n \}$. Since the identity $1=\vac_{[-1]}$ of $\U$ is contained in $\U_{\leq 0}$ and $\U_{\geq 0}$, we have $\NN^n_\L \U=\U=\NN^n_\sR \U	$ if $n\leq 0$. 

    The left ideals $\{\NN^n_\L \U:n\in \Z_{\geq 0}\}$ are a system of neighborhood of $0$ in $\U$, which gives a canonical seminorm on $\U$ \cite{DGK25}*{Definition A.6.1}. One can restrict these seminorms to the graded parts $\U_d$ of $\U$: 
	\begin{equation}\label{eq:completion}
		\NN^n_\L \U_d:=(\U\cdot \U_{\leq -n})_d=\sum_{j\leq -n} \U_{d-j}\cdot \U_j,\quad  	\NN^n_\sR \U_d=(\U_{\geq n}\cdot \U)_d=\sum_{i\geq n} \U_i\cdot \U_{d-i}.
	\end{equation}
	In particular, $\NN^n_\L \U_d=\NN^{n+d}_\sR \U_d$ for any $d\in \Z$. Define the completion
	$$
	\widehat{\U}_d:=\varprojlim_n\frac{\U_d}{\NN^n_\L \U_d}=\varprojlim_n\frac{\U_d}{\NN^{n+d}_\sR \U_d}\quad \mathrm{and}\quad \widehat{\U}:=\bigoplus_{d\in \Z} \widehat{\U}_d. 
	$$


	Let $J\subset \widehat{\U}$ be the graded ideal generated by the component form of Jacobi identity \eqref{eq:jacobiEq}, and let $\bar{J}\ssq \widehat{\U}$ be the closure of $J$ with respect to the seminorm defined by the image of neighborhoods \eqref{eq:nei} in $\widehat{U}$. This gives the universal enveloping algebra
	\begin{equation}\label{UEA}
    \scrU = U(V) :=\widehat{\U}/\bar{J}
    \end{equation}
	as a graded complete seminormed associative algebra with respect to the canonical seminorm induced by the image of neighborhoods \eqref{eq:nei}. The left and right neighborhoods at $0$ of $U(V)$  are given by 
	\begin{equation}\label{eq:nebors}
		\NN^n_\L \scrU=\scrU\cdot \scrU_{\leq -n}\quad \mathrm{and}\quad  	\NN^n_\sR \scrU=\scrU_{\geq n}\cdot \scrU,
	\end{equation}
	with $	\NN^{n+1}_\L \scrU_0=\sum_{j \geq n+1} \scrU_j\cdot \scrU_{-j}=	\NN^{n+1}_\sR \scrU_0$, for any $n\geq 0$.

    Note that the neighborhood $\NN^1_\L \scrU$ (resp. $\NN^1_\sR \scrU$) contains all the negatively (resp. positively) graded subspaces of $\scrU$, see \eqref{eq:completion} and \eqref{eq:nebors}. Hence the quotient modules have gradations 
	\begin{equation}\label{eq:gradationleftright}
		\scrU/	\NN^1_\L \scrU=\bigoplus_{n=0}^\infty(\scrU/	\NN^1_\L \scrU)_n,\quad \scrU/\NN^1_\sR \scrU=\bigoplus_{m=0}^\infty (\scrU/\NN^1_\sR \scrU)_{-m}.
	\end{equation}
	In particular, for any $m,n\geq 0$, we have 
	\begin{equation}\label{leftNbhdEq}
		(\scrU/	\NN^1_\L \scrU)_n =\spn \{a^1_{[n_1]}\hdots a^r_{[n_r]}+\NN^1_\L \scrU: \deg(a^i_{[n_i]})\geq 0,\ \forall i,\ \sum_{i=1}^r \deg(a^i_{[n_i]})=n \}
	\end{equation}\begin{equation}\label{rightNbhdEq}(\scrU/	\NN^1_\sR \scrU)_{-m}= \spn \{b^1_{[m_1]}\hdots b^s_{[m_s]}+\NN^1_\sR \scrU: \deg(b^j_{[m_j]})\leq 0,\ \forall j,\ \sum_{j=1}^s \deg(b^j_{[m_j]})=-m \}.
    \end{equation}
    \begin{definition}\cite{DGK25}*{Section 3.2}
        Let $V$ be a VOA. Define the underlying vector space of the mode transition algebra as
\[
\mathfrak{A} 
=  \scrU / \NN^{1}_{\L}\scrU  
\otimes_{\scrU_{0}} A \otimes_{\scrU_{0}} 
\scrU /\NN^{1}_{\R}\scrU
\]
whose gradations induce a bigrading
\[
\mathfrak{A}_{n, -m} 
= \big( \scrU / \NN^{1}_{\L}\scrU \big)_{n} 
\otimes_{\scrU_{0}} A \otimes_{\scrU_{0}} 
\big( \scrU / \NN^{1}_{\R}\scrU \big)_{-m} \quad \text{ for } n, m \in \mathbb{N}
\] where 
\[
\mathfrak{A} 
= \bigoplus_{n \in \mathbb{N}} 
   \bigoplus_{m \in \mathbb{N}} 
   \mathfrak{A}_{n, -m} . 
\] 
\end{definition} 
    The following lemma endows the vector space with an algebra structure.

    \begin{lemma}
    \cite{DGK25}*{Lemma 3.2.2}
		There is a natural linear isomorphism 
		$$\left(\scrU/	\NN^1_\L \scrU\right)\otimes_{\scrU_0} \left(\scrU/	\NN^1_\R \scrU\right)\to \A,\quad \bar{\alpha}\otimes\bar{\beta}\mapsto \alpha\circledast \beta,$$
		where $\alpha,\beta\in \scrU$ are homogeneous, and 
		\begin{equation}\label{eq:star}
			\alpha\circledast \beta=\begin{cases}
				0&\mathrm{if}\ \deg (\alpha)+\deg (\beta)\neq 0\\
				\overline{\alpha \beta}&\mathrm{if}\ \deg(\alpha)+\deg(\beta)=0,
			\end{cases}
		\end{equation}
		where $\overline{\alpha\beta}$ is the equivalence class of $\alpha\beta\in \scrU_0$ in $\A=\scrU_0/	\NN^1_\L \scrU_0$ and we extend this linearly to general products.
	\end{lemma}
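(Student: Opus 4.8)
The plan is to treat the stated map $\Psi\colon \bar\alpha\otimes\bar\beta\mapsto \alpha\circledast\beta$ in three stages: first check that the bilinear rule \eqref{eq:star} descends to the two quotients and is $\scrU_0$-balanced, so that $\Psi$ is a well-defined linear map on the tensor product; then exhibit an explicit one-sided inverse coming from the degree-zero part; and finally reduce the bijectivity to the nondegeneracy (``collapse'') of the $\circledast$-pairing, which is where essentially all of the work sits.

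For well-definedness I would argue degree-by-degree. By \eqref{eq:star} only homogeneous pairs with $\deg\alpha+\deg\beta=0$ contribute, so fix $\deg\alpha=n\ge 0$, $\deg\beta=-n$, giving $\alpha\beta\in\scrU_0$. If $\alpha\in\NN^1_\L\scrU=\scrU\cdot\scrU_{\le-1}$, write its degree-$n$ part as $\sum_i u_i v_i$ with $v_i\in\scrU_{\le-1}$; matching degrees forces $\deg u_i\ge n+1\ge 1$, so each $u_i\in\scrU_{\ge 1}$ and hence $\alpha\beta=\sum_i u_i(v_i\beta)\in\scrU_{\ge1}\scrU=\NN^1_\R\scrU$. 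Thus $\alpha\beta\in\NN^1_\R\scrU\cap\scrU_0=\NN^1_\R\scrU_0=\NN^1_\L\scrU_0$, so $\overline{\alpha\beta}=0$ in $\A$. The mirror computation, expanding $\beta=\sum_j p_j q_j$ with $p_j\in\scrU_{\ge1}$ and $q_j\in\scrU_{\le-1}$, handles $\beta\in\NN^1_\R\scrU$; here the identity $\NN^1_\L\scrU_0=\NN^1_\R\scrU_0$ recorded after \eqref{eq:nebors} is exactly what reconciles the two descriptions of $\A$. Balancing is then immediate: for $x\in\scrU_0$ one has $(\alpha x)\circledast\beta=\overline{\alpha x\beta}=\alpha\circledast(x\beta)$ by associativity of $\scrU$ and the same degree bookkeeping.

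Next I would produce the candidate inverse $\iota\colon \A\to(\scrU/\NN^1_\L\scrU)\otimes_{\scrU_0}(\scrU/\NN^1_\R\scrU)$, $\bar x\mapsto \bar x\otimes\overline{\vac_{[-1]}}$, which is well defined because $x\in\NN^1_\L\scrU_0$ already forces $\bar x=0$ in $(\scrU/\NN^1_\L\scrU)_0=\A$; a direct check gives $\Psi\circ\iota=\mathrm{id}_\A$ from $x\circledast\vac_{[-1]}=\overline{x}$, so $\Psi$ is surjective and $\iota$ injective. It remains to prove that $\iota$ is onto, i.e. that every tensor reduces to the form $\bar x\otimes\overline{\vac_{[-1]}}$. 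The structural inputs I would exploit are that $\scrU/\NN^1_\L\scrU$ and $\scrU/\NN^1_\R\scrU$ are cyclic (as left, resp. right, $\scrU$-modules) on the class of the identity $\vac_{[-1]}$, so that in the $\scrU$-bimodule structure $\bar\alpha\otimes\bar\beta=\alpha\cdot(\overline{\vac_{[-1]}}\otimes\overline{\vac_{[-1]}})\cdot\beta$; together with the annihilation relations $\gamma\cdot(\overline{\vac_{[-1]}}\otimes\overline{\vac_{[-1]}})=0$ for $\gamma\in\scrU_{\le-1}$ and $(\overline{\vac_{[-1]}}\otimes\overline{\vac_{[-1]}})\cdot\gamma=0$ for $\gamma\in\scrU_{\ge1}$ coming from \eqref{eq:gradationleftright}, and the fact that the two $\scrU_0$-actions on $\overline{\vac_{[-1]}}\otimes\overline{\vac_{[-1]}}$ coincide through the balancing.

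The main obstacle is exactly this last reduction, i.e. the nondegeneracy of $\circledast$. My approach would be an induction on $\deg\alpha$ and $\deg\beta$ in the expression $\alpha\cdot(\overline{\vac_{[-1]}}\otimes\overline{\vac_{[-1]}})\cdot\beta$, using the Borcherds bracket \eqref{eq:jacobiEq} to commute positively graded modes toward the right factor and negatively graded modes toward the left factor, where they annihilate the generator. Each such commutation either strictly lowers degree or leaves a degree-zero correction in $\scrU_0$; the $\scrU_0$-corrections are precisely what passes through the tensor product and assembles into the class $\overline{\alpha\beta}\in\A$, which is the mechanism by which the higher graded contributions are absorbed into degree zero. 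I expect two genuinely delicate points: controlling the topology, since $\scrU=\widehat{\scrU}/\bar J$ is only a completed seminormed algebra and one must verify the sweeping reduction converges and is compatible with the closure $\bar J$; and checking that no surviving term escapes $\A$, which is the statement that $\circledast$ is nondegenerate and hence that $\iota$ is surjective and $\Psi$ injective. I would first run the rank-one and length-two cases explicitly to pin down the scalar corrections produced by the bracket, and then lift these to the general case through the degree induction.
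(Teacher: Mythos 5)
First, a structural point: the paper does not prove this lemma at all --- it is quoted from \cite{DGK25}*{Lemma 3.2.2} --- so there is no in-paper proof to compare your argument with; I can only assess it on its own terms. Your stage 1 (the pairing of \eqref{eq:star} descends to both quotients and is $\scrU_0$-balanced) and stage 2 (the splitting $\iota(\bar x)=\bar x\otimes\overline{\vac_{[-1]}}$ with $\Psi\circ\iota=\mathrm{id}_{\A}$, hence surjectivity of $\Psi$) are correct.

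The gap is stage 3, and it is fatal for two independent reasons. First, with the factors ordered as in the statement (and as you read them), the claim you are trying to prove is false, and your own stage-1 computation almost says so: for \emph{any} homogeneous $\alpha$ with $\deg\alpha=n\geq 1$ and $\deg\beta=-n$ one has $\alpha\beta\in\scrU_n\cdot\scrU_{-n}\subseteq\scrU\cdot\scrU_{\leq -1}$, so $\alpha\circledast\beta=\overline{\alpha\beta}=0$ in $\A$; that is, the map \eqref{eq:star} kills every bigraded component of the domain except the $(0,0)$ one. But those components do not vanish: since the right action of $\NN^1_\L\scrU_0$ on $\scrU/\NN^1_\L\scrU$ is zero, the $(1,0)$ component is $(\scrU/\NN^1_\L\scrU)_1\otimes_{\scrU_0}\A\cong(\scrU/\NN^1_\L\scrU)_1$, which for the affine VOAs of this paper contains the nonzero classes $e(-1),f(-1),h(-1)$, and the $(1,-1)$ component is essentially $\mathfrak{A}_1$ itself, the nonzero object the whole paper studies (cf.\ \cref{linearInd}). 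So injectivity is unprovable in this orientation. The version that the rest of the paper actually uses --- see the introduction and \eqref{eq:modeproduct}, where $\circledast$ is applied as $\beta_{-j}\circledast\alpha_k$ --- pairs the factors in the opposite order, $(\scrU/\NN^1_\R\scrU)_{-d}\otimes_{\scrU_0}(\scrU/\NN^1_\L\scrU)_{d}\to\A$, $\bar\beta\otimes\bar\alpha\mapsto[\beta\alpha]_0$, where the product is ``negative times positive'' and the commutator terms survive (e.g.\ $\overline{e(1)f(-1)}=\overline{h(0)+k}\neq 0$); moreover the isomorphism must be read on each such bigraded piece separately, not on the full direct sum, since each diagonal piece maps onto $\A$.

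Second, even granting the corrected orientation, the mechanism you propose for the collapse is not a legal operation. In a tensor product over $\scrU_0$, the \emph{only} elements that can cross the tensor sign are elements of $\scrU_0$; a single mode of nonzero degree can never be ``commuted toward the other factor.'' In the orientation you work in, every mode occurring in the left factor has degree $\geq 0$ and every mode in the right factor has degree $\leq 0$ (see \eqref{leftNbhdEq} and \eqref{rightNbhdEq}), and since products and brackets of nonnegative-degree elements again have nonnegative degree, reordering within a factor can never produce an element of $\scrU_{\leq -1}$ hitting $\overline{\vac_{[-1]}}$ on the left, nor an element of $\scrU_{\geq 1}$ hitting it on the right; the annihilation relations you list can never be triggered, so the sweeping induction does nothing. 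The genuine mechanism (in the correct orientation) is that degree-zero products $u_+u_-\in\scrU_0$, with $u_+$ of positive and $u_-$ of negative degree, pass through the tensor sign and then die against the appropriate quotient, leaving the commutator terms that assemble the value in $\A$; making this into a proof of injectivity is exactly the content of \cite{DGK25}*{Lemma 3.2.2} and requires an argument beyond the heuristic given here.
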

    This allows us to give a well-defined associative product structure under the $\star$ operation on the mode transition algebra. Let 
\(
\alpha_i \otimes x \otimes \beta_{-j} \in \mathfrak{A}_{i,-j}
\)
and 
\(
\alpha_k \otimes y \otimes \beta_{-l} \in \mathfrak{A}_{k,-l}.
\)
We extend the the product map
\[
\star: \mathfrak{A} \times \mathfrak{A} \to \mathfrak{A}
\] linearly on homogeneous elements
\begin{equation}\label{eq:modeproduct}
(\alpha_i \otimes x \otimes \beta_{-j}) \star 
(\alpha_k \otimes y \otimes \beta_{-l})
=
\alpha_i \otimes x(\beta_{-j} \circledast \alpha_k) y 
\otimes \beta_{-l},
\end{equation}  which satisfies the compatibility condition
\[
\mathfrak{A}_{i,-j} \star \mathfrak{A}_{k,-l}
\subseteq \delta_{j,k}\,\mathfrak{A}_{i,-l}.
\] Identifying $\mathfrak{A}_{d, 0} $ with $\big( \scrU / \NN^{1}_{\L} \scrU)_d \otimes_{\scrU_0} \A$, this also induces an operation $ \mathfrak{A} \times \mathfrak{A}_{d, 0} \to \mathfrak{A}_{d, 0}$ given by 
\begin{equation}
    (\alpha_i \otimes x \otimes b_{-j}) \star (\alpha_k \otimes y) = \alpha_i \otimes x (b_{-j} \circledast \alpha_k) y
\end{equation}

\begin{definition}
    In particular, 
$\mathfrak{A}_d=\mathfrak{A}_{d,-d}$ is an associative algebra under $\star$ called the $d$-th mode transition algebra associated to $V$.
\end{definition}

\begin{definition}
    An element $\mathscr{I}_d\in \mathfrak{A}_d$ is called a strong unit if it satisfies 
		$$\mathscr{I}_d\star \mathfrak{a}=\mathfrak{a}\quad \mathrm{and}\quad \mathfrak{b}\star \mathscr{I}_d=\mathfrak{b},\quad \mathfrak{a}\in  \mathfrak{A}_{d,0},\ \mathfrak{b}\in  \mathfrak{A}_{0,-d}.$$
		$\mathfrak{A}_d$ is said to be  strongly unital if it admits a strong unit. We say that the VOA $V$ satisfies the strong unital property if its mode transition algebras $\mathfrak{A}_d$ are strongly unital for all $d\in \N$.
\end{definition}

\section{Strong unital property for the vacuum module VOA \texorpdfstring{$V_{\hatsl}(k,0)$}{}}

\subsection{Definition of vacuum module VOAs}
	First, we recall some notions in \cites{FZ92,LL04}. Let $\mathfrak{g}$ be a finite-dimensional semisimple Lie algebra with a Cartan subalgebra $\mathfrak{h}$, and let $\Delta$ be the root system associated to $\mathfrak{g}$ with root lattice $Q\subset \mathfrak{h}^\ast$. Normalize the invariant bilinear form on $\mathfrak{g}$ so that $(\theta|\theta)=2$, where $\theta$ is the longest root of $\Delta$. Let $\hat{\mathfrak{g}}=\mathfrak{g}\otimes \mathbb{C}[t,t^{-1}]\oplus \mathbb{C}K$ be its affinization with Lie bracket given by 
	$$
	[K,\hat{\mathfrak{g}}]=0,\quad [a(m),b(n)]=[a,b](m+n)+m\delta_{m+n,0} (a|b) K,\quad a,b\in \mathfrak{g},\ m,n\in \Z.
	$$
	Let $\mathbb{C} \vac$ be a $\hat{\mathfrak{g}}_{\geq 0}$-module with $K.\vac=k\vac$ and $\mathfrak{g}\otimes\mathbb{C}[t].\vac=0$. The universal vacuum module or the Weyl vacuum module 
	$$
	V_{\hat{\mathfrak{g}}}(k,0)=U(\hat{\mathfrak{g}})\otimes_{U(\hat{\mathfrak{g}}_{\geq 0})} \mathbb{C} \vac
	$$
	is a VOA equipped with the following properties:
    \begin{align*}
    Y(a(-1)\vac,z) &=\sum_{n\in \Z} a(n) z^{-n-1} , \; 
    \vac =1\otimes\vac, \; \omega_{\mathrm{aff}}=\frac{1}{2(h^\vee+k)}\sum_{i=1}^{\dim \mathfrak{g}}u^i(-1)u_i(-1)\vac.
    \end{align*} The VOA $V_{\mathfrak{\hat g}}(k,0)$ is also referred to as the vacuum module VOA of level $k$, where $h^\vee$ is the dual Coxeter number of $\Delta$, and $\{ u^i\}$ and $\{ u_i\}$ are dual orthonormal basis of $\mathfrak{g}$. For $k \neq - h^\vee$, translation operator
$T=L(-1)$ is realized as the $0$th-mode of the Sugawara $\omega_{\mathrm{aff}}$, so
\begin{equation}\label{L(-1)Operator}
L(-1)=\omega_0=\frac{1}{2(k+h^\vee)}\sum_i\sum_{m\in\mathbb Z} \normord{u^i(m)\,u_i(-m-1)}
\end{equation} where $\normord{}$ denotes normal ordering.

\subsection{Universal enveloping algebra of vacuum module VOA}\label{sec3.2}

We now turn our attention to the  universal enveloping algebra of $V = V_{\hat{\mathfrak g}}(k,0)$, the universal affine
VOA associated to a finite-dimensional semisimple Lie algebra $\mathfrak g$ at
level $k$. Following \cite{Fre07}*{Chapter 3.2} and \cite{FBZ04}*{Chapter 4} but using notation from \cites{DGK24, DGK25}, we introduce the natural
completion of its universal enveloping algebra and show it is isomorphic to $\widetilde{U}( \mathfrak{\hat{g}}, k)$.

\begin{definition}
Let $\hat{\mathfrak{g}} = \mathfrak{g} \otimes \mathbb{C}[t,t^{-1}] \oplus \mathbb{C}K$ and define the completion as the inverse limit
\[
\widetilde{U}(\hat{\mathfrak g})
:= \varprojlim_{N \ge 0} U(\hat{\mathfrak g}) / I_N,
\]
where
\begin{equation}\label{IdealN}
I_N := U(\hat{\mathfrak g}) \cdot
\bigl(\hat{\mathfrak g} \otimes t^N \mathbb{C}[[t]]\bigr)
\end{equation}
is the left ideal generated by all elements $x \otimes t^n$ with $n \ge N$. 
\end{definition}

\begin{definition}
    For a fixed $k \in \mathbb{C}$, denote $\widetilde{U}(\mathfrak{\hat g}, k)$ as the quotient of $\widetilde{U}(\mathfrak{\hat g})$ by the two-sided ideal generated by $K - k1$.
\end{definition}
A PBW basis of the vacuum module $V_{\hat{\mathfrak g}}(k,0)$ is given by
finite products of modes
\[
A
= a_{i_1}(n_1) \cdots a_{i_k}(n_k)\vac,
\qquad n_1 \le \cdots \le n_k < 0,
\]
where $\{x_i\}$ is a basis of $\mathfrak g$. The associated field
$Y(A,z)$ can be written as
\[
Y(A,z)
= \prod_{j=1}^k \frac{1}{(-n_j-1)!}\,
\normord{ \partial_z^{-n_1-1} a_{i_1}(z) \cdots
 \partial_z^{-n_k-1} a_{i_k}(z)}
\]
where $a_{i_k}(z) = Y(a_{i_k}(-1) \vac, z)$ and its $m$-th Fourier coefficient is given by
\[
A_m
= \text{Res}_{z=0} Y(A, z)z^m
\]
The normal ordering ensures that for any fixed cutoff $N \ge 0$, only finitely
many summands contain a mode $a_{i}(n)$ with $n \le N$. Thus for each mode $A_m$ we obtain a well-defined element of
$\widetilde{U}(\hat{\mathfrak g})$, and we can define a linear map
\[
f \colon L\bigl(V_{\hat{\mathfrak g}}(k,0)\bigr) \longrightarrow
\widetilde{U}(\hat{\mathfrak g}, k),\qquad
A_{[m]} \longmapsto A_m.
\] By continuity, $f$ extends uniquely to a linear map
\[
\widetilde{f} \colon
L\bigl(V_{\hat{\mathfrak g}}(k,0)\bigr)
\longrightarrow \widetilde{U}(\hat{\mathfrak g}, k).
\]

\begin{lemma}[{\cite{Fre07}*{Proposition 3.2.1}}]
The map
\[
\widetilde{f} \colon
L\bigl(V_{\hat{\mathfrak g}}(k,0)\bigr)
\longrightarrow \widetilde{U}(\hat{\mathfrak g}, k)
\]
is a Lie algebra homomorphism.
\end{lemma}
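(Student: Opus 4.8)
The plan is to show that $\widetilde{f}$ intertwines the two Lie brackets. The bracket on the target $\widetilde{U}(\hat{\mathfrak g}, k)$ is the commutator of the completed associative algebra, while the bracket on $L(V_{\hat{\mathfrak g}}(k,0))$ is the Borcherds bracket recalled above. Evaluating $\widetilde{f}$ on a pair of spanning elements and pulling the (continuous, linear) map through the bracket reduces the lemma to the single identity
\[
[A_m, B_n] \;=\; \sum_{\ell \ge 0} \binom{m}{\ell}\,(A_\ell B)_{m+n-\ell} \quad \text{in } \widetilde{U}(\hat{\mathfrak g}, k),
\]
for all homogeneous $A, B \in V_{\hat{\mathfrak g}}(k,0)$ and all $m,n \in \mathbb{Z}$; this is the completed-enveloping-algebra incarnation of the Borcherds commutator formula. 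All terms are honest elements of the completion: the cutoff argument recalled above shows that each $A_m$, $B_n$, and $(A_\ell B)_{m+n-\ell}$ converges, and that for every $N$ only finitely many $\ell$ contribute modulo $I_N$.

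First I would check the identity on the generating fields, taking $A = a(-1)\vac$ and $B = b(-1)\vac$ with $a,b \in \mathfrak g$, so that $A_m = a(m)$ and $B_n = b(n)$. The only nonzero products are $A_0 B = [a,b](-1)\vac$ and $A_1 B = (a|b)\,k\,\vac$, and since $Y(\vac, z) = \mathrm{Id}$ the right-hand side collapses to $[a,b](m+n) + m\,\delta_{m+n,0}\,(a|b)\,k$. This is precisely the defining relation of $\hat{\mathfrak g}$ at level $k$, which holds in $\widetilde{U}(\hat{\mathfrak g}, k)$ by construction, the central element $K$ having been specialized to $k$. Thus the generating series $a(z) = \sum_n a(n)z^{-n-1}$ form a mutually local family in $\widetilde{U}(\hat{\mathfrak g}, k)[[z,z^{-1}]]$ whose operator product expansion is the affine one.

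Next I would propagate the identity from the generators to all of $V_{\hat{\mathfrak g}}(k,0)$. By the PBW description each vector $A$ is a linear combination of normal-ordered products of derivatives of the generating fields applied to $\vac$, and $Y(A,z)$ is the matching normal-ordered product of the $\partial_z^j a_i(z)$. The structural inputs are Dong's lemma, namely that mutual locality is preserved under derivatives and normal-ordered products, together with the non-commutative Wick formula, which computes the singular part of the OPE of a normal-ordered product (equivalently, the coefficients $A_\ell B$) from the OPEs of the factors. Because the $\ell$-th product $A_\ell B$ computed inside the VOA $V_{\hat{\mathfrak g}}(k,0)$ is defined by exactly the same normal-ordering recipe, the two computations agree, and an induction on the lengths of the monomials defining $A$ and $B$ upgrades the commutator formula from the generating fields to arbitrary $A,B$.

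The main obstacle I anticipate is analytic rather than algebraic: one must run Dong's lemma and the Wick computation inside the $I_N$-adic completion and justify each rearrangement of the resulting infinite sums. The delicate point is to show that, for each fixed $N$, the formal identity of fields descends to a finite identity in $U(\hat{\mathfrak g})/I_N$, so that passage to the inverse limit is legitimate; this is where the finiteness built into the notion of a field and the compatibility of normal ordering with the filtration by the $I_N$ are essential. Once convergence is controlled, the generating-field case and Dong's lemma force the algebraic identity, and linearity together with continuity of $\widetilde{f}$ then yield that $\widetilde{f}$ is a Lie algebra homomorphism on all of $L(V_{\hat{\mathfrak g}}(k,0))$.
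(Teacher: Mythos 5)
The paper does not prove this lemma itself---it is quoted from Frenkel's Proposition 3.2.1---and your argument is essentially the standard proof given there: reduce to the Borcherds commutator formula $[A_m,B_n]=\sum_{\ell\ge 0}\binom{m}{\ell}(A_\ell B)_{m+n-\ell}$ in $\widetilde{U}(\hat{\mathfrak g},k)$, verify it on the generating fields where it collapses to the defining relations of $\hat{\mathfrak g}$ at level $k$, and propagate to all of $V_{\hat{\mathfrak g}}(k,0)$ via locality (Dong's lemma) and the Wick formula while controlling convergence modulo each $I_N$. Your generator computation is correct, and you have rightly identified the $I_N$-adic convergence of the rearranged sums as the only delicate point.
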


Passing to universal enveloping algebra $\U (V_{\hat{\mathfrak g}}(k,0))= U(L\bigl(V_{\hat{\mathfrak g}}(k,0)\bigr))$  and then taking its completion  with respect to $N_L^{n+1}(\U)$ for all $n \geq 0$, we extend via continuity to a homomorphism
\[
\phi:
\scrU(V_{\hat{\mathfrak g}}(k,0)\bigr)
\longrightarrow
\widetilde{U}(\hat{\mathfrak g}, k).
\]

\begin{lemma}
Let $V = V_{\hat{\mathfrak g}}(k,0)$ for $k \in \mathbb{C}$,
\begin{enumerate}\item[\emph{(1)}] {\cite{Fre07}*{Lemma 3.2.2}}
    The map
    \[
    \phi:  
        \scrU\bigl(V\bigr)
        \xrightarrow{ \cong }
        \widetilde{U}(\hat{\mathfrak g}, k)
    \]
    is an isomorphism of associative topological algebras.  
    Its inverse is given on generators by
    \[ 
    \widetilde{U}(\hat{\mathfrak g}, k) \to \scrU\bigl(V\bigr),
    \qquad
    a(n)\longmapsto (a(-1)\vac)_{[n]},
    \]
    for all $a\in\mathfrak g$ and $n\in\mathbb Z$.
    \item[\emph{(2)}] The Zhu algebra $A(V)$ is isomorphic to $U(\mathfrak g)$.
\end{enumerate}
\end{lemma}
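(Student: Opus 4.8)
The plan is to prove the two parts separately: part~(1) by producing an explicit continuous inverse to $\phi$, and part~(2) by the classical Frenkel--Zhu strategy of building a map out of $U(\mathfrak g)$ and controlling its kernel.

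For part~(1), I already know from the preceding lemma that $\phi$ is a continuous homomorphism of topological algebras, so it suffices to exhibit a continuous two-sided inverse. I would define $\psi$ on topological generators by $a(n)\mapsto (a(-1)\vac)_{[n]}$ and first verify that $\psi$ respects the defining relations of $\widetilde U(\hat{\mathfrak g},k)$, namely the affine bracket $[a(m),b(n)]=[a,b](m+n)+m\,\delta_{m+n,0}(a|b)K$ together with $K=k\cdot 1$. The heart of this is the computation of the products $(a(-1)\vac)_\ell(b(-1)\vac)$ in $V_{\hat{\mathfrak g}}(k,0)$. Since $(a(-1)\vac)_\ell=a(\ell)$ as an operator and $a(\ell)\vac=0$ for $\ell\geq 0$, the commutation relations of $\hat{\mathfrak g}$ on the vacuum give
\[
(a(-1)\vac)_0(b(-1)\vac)=[a,b](-1)\vac,\quad (a(-1)\vac)_1(b(-1)\vac)=k(a|b)\vac,\quad (a(-1)\vac)_\ell(b(-1)\vac)=0\ (\ell\geq 2).
\]
Substituting these into the Borcherds bracket $[A_{[m]},B_{[n]}]=\sum_{\ell\geq 0}\binom{m}{\ell}(A_\ell B)_{[m+n-\ell]}$ and using the identity $\vac_{[j]}=\delta_{j,-1}\cdot 1$ in $L(V)$---which holds because $L(-1)\vac=0$ forces $\vac\otimes t^{j}\in\operatorname{Im}\partial$ for $j\neq -1$---reproduces exactly the affine relation under the correspondence $a(m)\leftrightarrow(a(-1)\vac)_{[m]}$. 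Thus $\psi$ is a well-defined Lie algebra homomorphism on $\hat{\mathfrak g}$ and extends to the enveloping algebra.

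I would then check compatibility with the completions: the defining ideals $I_N$ of $\widetilde U(\hat{\mathfrak g})$ land inside the left neighborhoods $\NN^1_\L\scrU$ in the appropriate degree, so $\psi$ is continuous and descends to the completed quotient. The explicit formulas on generators give $\phi\circ\psi=\mathrm{id}$ and $\psi\circ\phi=\mathrm{id}$ on the dense subalgebra generated by the $a(n)$, and extending by continuity shows $\phi$ and $\psi$ are mutually inverse topological isomorphisms.

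For part~(2), I would construct $F\colon U(\mathfrak g)\to\A(V)$ by $a\mapsto [a(-1)\vac]$ on $\mathfrak g$ and verify it is a well-defined algebra homomorphism via the universal property of $U(\mathfrak g)$: it suffices to check $F(a)*F(b)-F(b)*F(a)=F([a,b])$. Expanding the Zhu product $[a(-1)\vac]*[b(-1)\vac]=[a(-1)b(-1)\vac]+[[a,b](-1)\vac]$ from $a*b=\sum_i\binom{1}{i}a_{i-1}b$, using $[a(-1),b(-1)]\vac=[a,b](-2)\vac$, and rewriting the degree $-2$ term via $c\circ\vac=c(-2)\vac+c(-1)\vac\in O(V)$, the symmetric difference collapses to $[[a,b](-1)\vac]=F([a,b])$. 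Surjectivity follows by showing $\A(V)$ is generated by the image of $V_{(1)}=\mathfrak g(-1)\vac$: the relations $a\circ w\in O(V)$ let one reduce any mode $a(-n)$ with $n\geq 2$ in a monomial down to modes $-1$, so every class is a polynomial in the $[a(-1)\vac]$.

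The main obstacle is the injectivity of $F$, since well-definedness and surjectivity are formal once the product computations are in hand. For this I would invoke Zhu's correspondence: any $\mathfrak g$-module $W$ becomes a $\hat{\mathfrak g}_{\geq 0}$-module with $K=k$ and $\mathfrak g\otimes t\mathbb{C}[t]$ acting by $0$, and the associated generalized Verma module is an admissible $V_{\hat{\mathfrak g}}(k,0)$-module whose top level is $W$, with $[a(-1)\vac]$ acting as $a$. Hence the $\A(V)$-structure on $W$ pulls back along $F$ to the original $U(\mathfrak g)$-structure. Taking $W$ to be the regular representation (or the sum of all $\mathfrak g$-modules), any $x\in\ker F$ acts as zero on every $\mathfrak g$-module and so $x=0$, giving injectivity. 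Together with surjectivity this yields $\A(V)\cong U(\mathfrak g)$.
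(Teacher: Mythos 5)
The paper does not prove this lemma at all: part~(1) is quoted directly from Frenkel's book (\cite{Fre07}, Lemma~3.2.2) and part~(2) is the classical Frenkel--Zhu computation, so there is no in-paper argument to compare against. Your proposal supplies correct renditions of the standard proofs. For part~(1), the mode computations $(a(-1)\vac)_0(b(-1)\vac)=[a,b](-1)\vac$, $(a(-1)\vac)_1(b(-1)\vac)=k(a|b)\vac$, and vanishing for $\ell\ge 2$ are right, and combined with $\vac_{[j]}=\delta_{j,-1}\cdot 1$ (which does follow from $L(-1)\vac=0$) they reproduce the affine bracket, so $\psi$ is well defined; the degree count $\deg\bigl((a(-1)\vac)_{[n]}\bigr)=-n$ also makes the continuity claim about $I_N$ correct. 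For part~(2), the Zhu-product computation $a*b-b*a=[a,b](-2)\vac+2[a,b](-1)\vac\equiv[a,b](-1)\vac \bmod O(V)$ is exactly right, and the injectivity argument via the generalized Verma module over the left regular representation of $U(\mathfrak g)$ is a legitimate (and arguably cleaner) alternative to Frenkel--Zhu's direct identification of $O(V)$. The one step you assert rather than prove is the density, in $\scrU(V)$, of the subalgebra topologically generated by the elements $(a(-1)\vac)_{[n]}$, which is what you need to upgrade $\psi\circ\phi=\mathrm{id}$ on generators to an identity everywhere. This is not automatic: it requires rewriting a general mode $(a(n)B)_{[m]}$ in terms of the generators via the component Jacobi identity, which is precisely the reduction the paper carries out at the start of Section~3.3. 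Citing or reproducing that reduction would close the only real gap in your part~(1).
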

\subsection{Proof of the first main theorem}\label{sec3.3}
The previous lemma is essential for constructing a usable form of the universal enveloping algebra of the vacuum module VOA useful for analyzing its mode transition algebra. Recall the spanning set of $(\scrU/\NN^1_\L \scrU)_d$ is of the form
\begin{equation*}
\{A^1_{[n_1]}\hdots A^r_{[n_r]}+\NN^1_\L \scrU: A^i \in V_{\hat{\mathfrak g}}(k,0), \ \deg(A^i_{[n_i]})\geq 0,\ \forall i,\ \sum_{i=1}^r \deg(A^i_{[n_i]})=d\}.
\end{equation*} For the case of the universal affine VOA, we claim each term $A^i_{[n_i]}$ can be expressed as the span of monomials consisting of the terms $(a^k(-1)\vac)_{[n_k]}$ where $\deg((a^k(-1)\vac)_{[n_k]}) = -n_k$. Using the Jacobi identity, we have that given $a \in \mathfrak{g}, B \in V_{\mathfrak{\hat g}}(k,0)$, 
        \[
        (a(n) B)_{[m]} = \sum_{j \geq 0} \binom{n}{j} (-1)^j (a(-1)\vac)_{[n-j]}B_{[m+j]} - \sum_{j \geq 0} \binom{n}{j}(-1)^{n+j}  B_{[m+n-j]} (a(-1)\vac)_{[j]}.
        \]

   
   The claim then follows by applying an inductive argument on each length $k$ monomial  \[(a(-n_1) \hdots a(-n_k)\vac)_{[m]}.\]
       Hence for $a\in \mathfrak{g}$ and $n\in \Z$, we can denote the element $(a(-1)\vac)_{[n]}\in L(V)$ by the symbol $a(n)$ where 
	$\deg a(n)=-n$. For $\scrU=\scrU(V_{\hat{\mathfrak{g}}}(k,0))$, we can rewrite \cref{leftNbhdEq,rightNbhdEq} as
	\begin{equation}\label{leftNbhd} (\scrU/\NN^1_\L \scrU)_d = \{a_1(-r_1)\hdots a_m(-r_m) : r_1\geq \hdots \geq r_m\geq 0,\ a^i\in \mathfrak{g},\  r_1+\hdots+r_m=d \}
    \end{equation}     \begin{equation} \label{rightNbhd}(\scrU/\NN^1_\R \scrU)_d = \{b_n(s_n)\dots b_1(s_1): s_1\geq \dots \geq s_n\geq 0,\ b^j\in \mathfrak{g},\  s_1+\dots+s_n=d  \} \end{equation}
	where we use the same notation $a^1(-r_1)\dots a^m(-r_m)$ for its class in $\scrU/\NN^1_\L \scrU$. 
	
	We observe the following facts for $\mathfrak{A}_1$:
	\begin{enumerate}
		\item As a vector space, 
		\begin{align*}
			\mathfrak{A}_1&=(\scrU/\NN^1_\L \scrU)_1\otimes _{\scrU_0}U(\mathfrak{g})\otimes_{\scrU_0} (\scrU/\NN^1_\R \scrU)_{-1}\\
			&=\spn\{a(-1) \otimes x \otimes b(1): a,b\in \mathfrak{g}, x\in U(\mathfrak{g}) \},
		\end{align*}
        where $U(\mathfrak{g})=\A\cong \scrU_0/\sum_{i\geq 1} \scrU_{i}\scrU_{-i}$.
		
		\item For homogeneous $a,b \in \scrU$ such that $ab\in \scrU_0$, if $\deg a<0$, then $\overline{ba}=0$ in $\A$ and 
		$$a \circledast b= \overline{ab}=\overline{ba+[a,b]}=\overline{[a,b]}.$$
		Their product is given by
		\begin{equation}\label{eq:A1prod}
			\begin{aligned}
				&(a(-1)\otimes x\otimes b(1))\star (a'(-1)\otimes x'\otimes b'(1))\\
				&=a(-1)\otimes (x b(1)\circledast a'(-1)x')\otimes b'(1)\\
				&=a(-1)\otimes x( [b,a']+(b|a')k )x'\otimes b'(1)\\
				&=a(-1)\otimes x\otimes(( [b,a']+(b|a')k )x'b'(1))
            \end{aligned}
		\end{equation}
    \end{enumerate}

    \begin{lemma}\label[lemma]{linearInd}
        For $\scrU=\scrU(V_{\hatsl}(k,0))$ and $k \in \mathbb{C}$ not equal to $-2$, consider the mode transition algebra \[\mathfrak{A}_{0,-1}=\A\otimes_{\scrU_0}(\scrU/\NN_\R^1\scrU)_{-1}.\] Then \[\{1\otimes e(1), 1\otimes f(1), 1\otimes h(1)\}\] is an $\A$-basis of $\mathfrak{A}_{0, -1}$ as a left $\A$-module where $\A = U(\mathfrak{sl}_2)$.
    \end{lemma}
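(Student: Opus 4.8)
\emph{Strategy.} The plan is to recognise $\mathfrak{A}_{0,-1}$ as the degree $-1$ piece of the (right‑handed) generalized Verma, or Weyl, module $\Phi^\R(\A)$ and to read off $\A$‑freeness from the PBW theorem. Throughout I work inside $\widetilde{U}(\hatsl,k)$ via the isomorphism \eqref{isomPhi}, which is available precisely because $k\neq -2$ supplies the Sugawara conformal vector and hence the VOA $V_{\hatsl}(k,0)$ together with its enveloping algebra; under it $\A=U(\mathfrak{sl}_2)$ and $K$ acts by the scalar $k$.

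\emph{Spanning.} By \cref{rightNbhd} every degree $-1$ spanning monomial of $(\scrU/\NN^1_\R\scrU)_{-1}$ has the shape $b_1(0)\cdots b_r(0)\,c(1)$ with $b_i,c\in\{e,f,h\}$, that is, a word of zero modes followed by a single positive mode. Since each $b(0)$ lies in $\scrU_0$, it may be transported across $\otimes_{\scrU_0}$ onto the Zhu factor, giving $1\otimes b_1(0)\cdots b_r(0)\,c(1)=(b_1\cdots b_r)\cdot\bigl(1\otimes c(1)\bigr)$ in $\mathfrak{A}_{0,-1}$. Hence $\{1\otimes e(1),\,1\otimes f(1),\,1\otimes h(1)\}$ generates $\mathfrak{A}_{0,-1}$ as a left $\A$‑module, and it remains only to show that the surjection $\A^{\oplus 3}\twoheadrightarrow \mathfrak{A}_{0,-1}$, $(x_e,x_f,x_h)\mapsto\sum_a x_a\cdot(1\otimes a(1))$, is injective.

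\emph{Independence.} I identify $\mathfrak{A}_{0,-1}=\Phi^\R(\A)_{-1}$ with the degree $-1$ component of $\Phi^\R(\A)=\A\otimes_{\scrU_0}(\scrU/\NN^1_\R\scrU)$, the module obtained through \eqref{isomPhi} by freely applying the modes $a(s)$, $s\geq 1$, to the degree‑$0$ seed $\A=U(\mathfrak{sl}_2)$. By PBW this module is free over $U(\mathfrak{g}\otimes t\mathbb{C}[t])$, so in degree $-1$ it is $(\mathfrak{g}\otimes t)\otimes_{\mathbb C}\A$ carrying the diagonal left $\A$‑action (adjoint on $\mathfrak{g}\otimes t$, left multiplication on the seed). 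To see that the three pure tensors $1\otimes a(1)=(a\otimes t)\otimes 1$ form a free basis, filter by the PBW degree of the seed: the top‑degree part of $\sum_a x_a\cdot(1\otimes a(1))$ is $\sum_a (a\otimes t)\otimes\overline{x_a}$, the adjoint contributions being of strictly lower seed‑degree. Since $e\otimes t,\,f\otimes t,\,h\otimes t$ are linearly independent over the commutative graded seed algebra, a relation forces each leading symbol $\overline{x_a}$, and hence each $x_a$, to vanish; structurally this is an instance of the tensor identity $(\mathfrak{g}\otimes t)\otimes\A_{\mathrm{reg}}\cong\A^{\oplus 3}$.

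\emph{Main obstacle.} The delicate point is this last identification: the tensor is taken over the full degree‑zero algebra $\scrU_0$, which strictly contains the zero modes $U(\mathfrak{g}\otimes t^{0})$ (it also holds balanced products such as $e(-1)f(1)$), so one must verify that tensoring over $\scrU_0$ introduces no relations beyond those of the Weyl module, equivalently that the left $\scrU_0$‑action on $(\scrU/\NN^1_\R\scrU)_{-1}$ is exactly the one inherited from $\Phi^\R(\A)$. This is where \eqref{isomPhi} does the real work, letting me compute the relevant actions inside $\widetilde{U}(\hatsl,k)$ and invoke PBW uniformly. As a concrete cross‑check one may pair $\sum_a x_a\cdot(1\otimes a(1))$ with the basis $e(-1),f(-1),h(-1)$ of $(\scrU/\NN^1_\L\scrU)_1$ under $\star$, obtaining the equations $\sum_a x_a\,(a(1)\circledast c(-1))=0$ in $\A$ with $a(1)\circledast c(-1)=[a,c]+(a|c)\,k$; these already pin down the $x_a$ for generic $k$, while the PBW argument is what delivers independence uniformly for every $k\neq -2$.
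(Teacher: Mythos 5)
Your overall strategy --- identifying $\mathfrak{A}_{0,-1}$ with the degree $-1$ piece of the Weyl module $\Phi^\R(\A)$ and reading off freeness from PBW --- is genuinely different from the paper's. The paper never invokes PBW for independence: it pairs a putative relation $\lambda\cdot(1\otimes e(1))+\mu\cdot(1\otimes f(1))+\nu\cdot(1\otimes h(1))=0$ against $h(-1)$, $e(-1)$, $f(-1)$ via $\circledast$, obtains a linear system over $U(\mathfrak{sl}_2)$, and manipulates it down to $\lambda\bigl(\Omega-k(k+2)\bigr)=0$ with $\Omega$ the Casimir, concluding because $U(\mathfrak{sl}_2)$ is an integral domain; the hypothesis $k\neq -2$ enters there through a division by $k+2$. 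Your closing ``cross-check'' is exactly that argument, but you stop at ``pins down the $x_a$ for generic $k$'' without carrying out the computation, so it cannot stand in for the proof.

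The difficulty with your main line is that you name the delicate point and then do not prove it. You correctly observe that the tensor product is taken over all of $\scrU_0$, which contains balanced products such as $e(-1)f(1)$ and, after completion, infinite sums such as the Sugawara zero mode, and that one must check that applying $\A\otimes_{\scrU_0}(-)$ imposes no relations beyond those of the Weyl module. The sentence that \eqref{isomPhi} ``does the real work'' here is not an argument. What is actually needed is, first, the identity $\NN^1_\L\scrU_0\cdot(\scrU/\NN^1_\R\scrU)_{-1}=0$, which follows from $\NN^1_\L\scrU_0=\sum_{j\ge 1}\scrU_j\scrU_{-j}$ together with $\scrU_{\geq 1}\subseteq \NN^1_\R\scrU$ (so each product $\alpha\beta$ with $\deg\alpha\ge 1$ kills the right-hand quotient), plus continuity of the action and discreteness of the graded piece to handle infinite convergent sums; this is what yields $\A\otimes_{\scrU_0}(\scrU/\NN^1_\R\scrU)_{-1}\cong(\scrU/\NN^1_\R\scrU)_{-1}$. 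Second, one must check that in the completed algebra the closure of $(\mathfrak{g}\otimes t^{-1}\mathbb{C}[t^{-1}])\cdot\scrU$ still meets $U(\mathfrak{sl}_2)\{e(1),f(1),h(1)\}$ trivially, i.e.\ that the uncompleted PBW decomposition controls the degree $-1$ graded piece after completion. Neither point appears in your write-up, and both require an argument. A further warning sign you should have addressed explicitly: your PBW argument uses $k\neq -2$ only to guarantee that the conformal vector, and hence the VOA, exists, whereas the paper's proof uses the hypothesis in an essential algebraic step; you should either justify that the hypothesis is needed only for the setup, or locate where it enters your independence argument.
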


    \begin{proof}
        It follows from the definition of   \cref{rightNbhd} that the set $\{1\otimes e(1), 1\otimes f(1), 1\otimes h(1)\}$ spans $\mathfrak{A}_{0, -1}$ as an $\A$-module. It remains to prove that they are $\A$-linearly independent. For $\lambda, \mu, \nu \in \A$, let \begin{equation}\label{linIndEq}   \lambda \cdot 1 \otimes h(1)+\mu \cdot 1 \otimes e(1) + \nu \cdot 1 \otimes f(1) =0 \end{equation} There is a well-defined bilinear action given by
\begin{align*}
    &  \A \otimes_{\scrU_0} (\scrU/\NN_\R^1 \scrU)_{-1} \times (\scrU/\NN_\L^1 \scrU)_{1} \to \A \otimes_{\scrU_0} \A \cong \A, \\
     & (x \otimes \alpha(1), \beta(-1)) \mapsto x \otimes(\alpha(1) \circledast \beta(-1)) = x  \otimes [\alpha(1), \beta(-1)]. 
\end{align*}
 Applying an action of $(-)\circledast h(-1)$ to \cref{linIndEq}, we have
        \begin{align*}
            (\lambda \cdot 1 \otimes h(1)+\mu \cdot 1 \otimes e(1) + \nu \cdot 1 \otimes f(1) =0)\circledast h(-1) 
            &=     2\lambda k \otimes 1 -2\mu \otimes e(0) + 2\nu \otimes f(0) = 0.
        \end{align*} Passing through the isomorphism $\scrU_0 \cong \A$, this implies that $\lambda k - \mu e +  \nu f = 0$. Applying the action $(-) \circledast  e(-1)$ and $(-) \circledast f(-1)$ to \cref{linIndEq} gives us two more equations $2 \lambda e - \nu h + \nu k = -2 \lambda f + \mu h + \mu k = 0$. We now show that $\lambda=\mu=\nu=0$. From the three actions, we obtain the system of equations
\begin{align}
0&=\lambda  k-\mu e+\nu f, \label{eq:lin1}\\
0&=2\lambda e-\nu(h-k), \label{eq:lin2}\\
0&=-2\lambda f+\mu(h+k). \label{eq:lin3}
\end{align}
We rewrite $\ref{eq:lin2}$ and $\ref{eq:lin3}$ as
\begin{equation}\label{eq:lin4}
2\lambda e=\nu(h-k), \qquad 2\lambda f=\mu(h+k).
\end{equation}

Multiply the first identity in \eqref{eq:lin4} on the right by $f$ and use $(h-k)f=f(h-k-2)$ (since $hf=f(h-2)$) to get
\begin{equation}\label{eq:lin5}
2\lambda ef=\nu f(h-k-2).
\end{equation}
Multiply the second identity in \eqref{eq:lin4} on the right by $e$ and use $(h+k)e=e(h+k+2)$ (since $he=e(h+2)$) to get
\begin{equation}\label{eq:lin6}
2\lambda fe=\mu e(h+k+2).
\end{equation}
Subtracting \eqref{eq:lin6} from \eqref{eq:lin5} and using $ef-fe=h$ yields
\begin{equation}\label{eq:lin7}
2\lambda h=\nu f(h-k-2)-\mu e(h+k+2).
\end{equation}

Now use \eqref{eq:lin1} to substitute $\mu e=k\lambda+\nu f$ into \eqref{eq:lin7}. This gives
\[
2\lambda h
=\nu f(h-k-2)-(k\lambda+\nu f)(h+k+2)
=-2(k+2)\nu f-k\lambda(h+k+2),
\]
hence
\begin{equation}\label{eq:lin8}
2(k+2)\nu f=-\lambda(k+2)(h+k).
\end{equation}
If $k\neq -2$, we may divide by the scalar $k+2$ to obtain
\begin{equation}\label{eq:lin9}
\nu f=-\tfrac12\,\lambda(h+k).
\end{equation}
Then \eqref{eq:lin1} implies
\begin{equation}\label{eq:lin10}
\mu e=k\lambda+\nu f=\tfrac12\,\lambda(k-h)=-\tfrac12\,\lambda(h-k).
\end{equation}

Substitute \eqref{eq:lin10} into \eqref{eq:lin6}:
\[
2\lambda fe=\mu e(h+k+2)=-\tfrac12\,\lambda(h-k)(h+k+2),
\]
so
\begin{equation}\label{eq:lin11}
\lambda\Bigl(4fe+(h-k)(h+k+2)\Bigr)=0.
\end{equation}
Expanding $(h-k)(h+k+2)=h^2+2h-k(k+2)$, \eqref{eq:lin11} becomes
\begin{equation}\label{eq:lin12}
\lambda\Bigl(\Omega-k(k+2)\Bigr)=0,
\qquad\text{where }\Omega:=4fe+h^2+2h\in \A
\end{equation}
is the Casimir element of $\sl$. Since $\A=U(\mathfrak{sl}_2)$ is an integral domain 
and $\Omega-k(k+2)\neq 0$, \eqref{eq:lin12} forces $\lambda=0$. Then \eqref{eq:lin4} gives $\nu(h-k)=0$ and $\mu(h+k)=0$, hence $\mu=\nu=0$.
Therefore $ \lambda \cdot 1 \otimes h(1)+\mu \cdot 1 \otimes e(1) + \nu \cdot 1 \otimes f(1)=0$ implies $\lambda=\mu=\nu=0$,
and so we've shown that $\{1\otimes h(1),1\otimes e(1),1\otimes f(1)\}$ is $\A$-linearly independent.  
    \end{proof}
    \begin{theorem}\label{noStrongUnitThm}
        The vacuum module VOA $V_{\hatsl}(k,0)$ does not satisfy the strong unital property for all non-critical level $k \in \mathbb{C}$ with $k \neq -2$.
        \end{theorem}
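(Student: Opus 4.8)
Since the strong unital property requires a strong unit in every $\mathfrak A_d$, it suffices to rule one out, and I will work with the simplest case $d=1$. By the two observations recorded just before \cref{linearInd}, every element of $\mathfrak A_1$ has the form $\mathscr I_1=\sum_{\alpha,\beta\in\{e,f,h\}}\alpha(-1)\otimes x_{\alpha,\beta}\otimes\beta(1)$ with $x_{\alpha,\beta}\in\A=U(\sl)$, and for $\mathfrak b=x\otimes b(1)\in\mathfrak A_{0,-1}$ the product \eqref{eq:A1prod} gives
\[
\mathfrak b\star\mathscr I_1=\sum_{\beta}x\Bigl(\sum_{\alpha}M_{b,\alpha}\,x_{\alpha,\beta}\Bigr)\otimes\beta(1),\qquad M_{b,\alpha}:=[b,\alpha]+(b|\alpha)k .
\]
The first step is to apply \cref{linearInd}: because $\{1\otimes e(1),1\otimes f(1),1\otimes h(1)\}$ is an $\A$-basis of $\mathfrak A_{0,-1}$, the unit condition $\mathfrak b\star\mathscr I_1=\mathfrak b$ for all $\mathfrak b\in\mathfrak A_{0,-1}$ is equivalent to the single matrix identity $MX=I_3$ in $M_3(U(\sl))$, where $X=(x_{\alpha,\beta})$ and $M$ is the explicit matrix with rows and columns indexed by $e,f,h$,
\[
M=\begin{pmatrix}0 & h+k & -2e\\ k-h & 0 & 2f\\ 2e & -2f & 2k\end{pmatrix}.
\]
Thus it is enough to prove that for $k\neq-2$ the equation $MX=I_3$ has no solution with entries in $U(\sl)$; note that only the right-unit half of the strong-unit condition is needed.

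My plan is to pass to the skew field of fractions $D=\operatorname{Frac}(U(\sl))$ --- which exists because $U(\sl)$ is a Noetherian domain, hence Ore --- solve the system there, and show that the forced value of one entry of $X$ is already not in $U(\sl)$. Reading off the $\beta=e$ column of $MX=I_3$ gives
\[
(h+k)x_{f,e}-2e\,x_{h,e}=1,\qquad (k-h)x_{e,e}+2f\,x_{h,e}=0,\qquad 2e\,x_{e,e}-2f\,x_{f,e}+2k\,x_{h,e}=0 .
\]
Since $h\pm k$ are nonzero elements of the domain $U(\sl)$, they are invertible in $D$, so I would solve the first two relations for $x_{f,e}$ and $x_{e,e}$ and substitute into the third. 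Using the Cartan identities $(h-k)^{-1}f=f(h-k-2)^{-1}$, $(h+k)^{-1}e=e(h+k+2)^{-1}$, and $f(h+k)^{-1}=(h+k+2)^{-1}f$, the coefficient of $x_{h,e}$ becomes central; the key computation, performed inside the commutative subfield generated by $h$ and the Casimir $\Omega=4fe+h^2+2h$, is the identity
\[
2ef(h-k-2)^{-1}-2fe(h+k+2)^{-1}+k=\frac{(k+2)\bigl(\Omega-k(k+2)\bigr)}{h^2-(k+2)^2},
\]
which isolates the factor $(k+2)$ responsible for the critical-level exception.

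Granting this identity, the third relation reads $\tfrac{2(k+2)(\Omega-k(k+2))}{\,h^2-(k+2)^2\,}\,x_{h,e}=2f(h+k)^{-1}$, and for $k\neq-2$ every factor on the left is a nonzero, hence invertible, element of $D$. Solving, and simplifying $\bigl(h^2-(k+2)^2\bigr)f(h+k)^{-1}=(h-k-2)f$, gives the unique value
\[
x_{h,e}=\frac{(h-k-2)f}{(k+2)\bigl(\Omega-k(k+2)\bigr)}\in D .
\]
The last and, I expect, hardest step is to show this element is genuinely not in $U(\sl)$. Since $\Omega-k(k+2)$ is central, $x_{h,e}\in U(\sl)$ would force $(h-k-2)f\in\bigl(\Omega-k(k+2)\bigr)U(\sl)$; passing to the associated graded $\gr U(\sl)\cong S(\sl)=\mathbb C[e,f,h]$, a polynomial domain, and comparing principal symbols in degree two, the symbol of $(h-k-2)f$ is $hf$ while that of $\Omega-k(k+2)$ is $4fe+h^2$, so a divisibility $(h-k-2)f=(\Omega-k(k+2))\,w$ would force $hf=c(4fe+h^2)$ for a scalar $c$, which is impossible. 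Hence $x_{h,e}\notin U(\sl)$, contradicting $MX=I_3$; therefore $\mathfrak A_1$ admits no strong unit and $V_{\hatsl}(k,0)$ fails the strong unital property for all $k\neq-2$. The main obstacles are the noncommutative elimination --- handled by working in $D$ and using the Cartan identities to push everything onto the central variables $h$ and $\Omega$ --- and the final symbol computation certifying that the denominator cannot cancel.
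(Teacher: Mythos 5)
Your proposal is correct, and it follows the same skeleton as the paper's proof --- restrict to $d=1$, write a putative unit as $\sum_{\alpha,\beta}\alpha(-1)\otimes x_{\alpha,\beta}\otimes\beta(1)$, act on $1\otimes b(1)$ via \eqref{eq:A1prod}, and use \cref{linearInd} to extract equations over $\A=U(\sl)$ --- but the final contradiction is reached by a genuinely different route. The paper keeps only the single equation coming from $b=e$ and the $e(1)$-component, $\lambda_{f,e}(h+k)x_{f,e}-2\lambda_{h,e}e\,x_{h,e}=1$, and declares it "clearly a contradiction"; making this precise requires observing that $1$ does not lie in the right ideal of $U(\sl)$ generated by $h+k$ and $e$ (e.g.\ apply the Chevalley antiautomorphism and act on a lowest-weight vector of weight $-k$). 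You instead take the full $\beta=e$ column of $MX=I_3$, eliminate over the Ore skew field $D=\operatorname{Frac}(U(\sl))$, and pin down the unique candidate $x_{h,e}=(h-k-2)f\bigl((k+2)(\Omega-k(k+2))\bigr)^{-1}$, which you then exclude from $U(\sl)$ by a symbol argument in $\gr U(\sl)\cong\mathbb{C}[e,f,h]$. I verified your key identity (writing $4ef=\Omega-h^2+2h$ and $4fe=\Omega-h^2-2h$, the numerator collapses to $2(k+2)\bigl(\Omega-k(k+2)\bigr)$), the Cartan commutation rules you use to push coefficients past $f$ and $e$, and the final degree comparison; all are correct, and the nonvanishing of every factor you invert in $D$ for $k\neq-2$ is also in order. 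Your route is longer than necessary --- the paper's one equation is already unsolvable --- but it buys two things: the hypothesis $k\neq-2$ becomes visible a second time as the explicit factor $(k+2)$ in the eliminated coefficient (rather than entering only through \cref{linearInd}), and the paper's unproved "clearly a contradiction" is replaced by a fully explicit non-integrality certificate.
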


        \begin{proof}
            We will assume there exists a strong unit for $\mathfrak{A}_1$ and derive a contradiction. First, let $\alpha,\beta \in \{e, f, h\}$, then the unit element is of the form 
\[
\mathscr{I}_1 = \sum_{\alpha, \beta} \lambda_{\alpha, \beta} \alpha(-1) \otimes x_{\alpha, \beta} \otimes \beta(1) \in \mathfrak{A}_1.
\]  where $x_{\alpha, \beta} \in U(\sl)$. Let $1 \otimes b(1) \in \mathfrak{A}_{0, -1}$, then our simplifications from \ref{eq:A1prod} show that
\[
(1 \otimes b(1)) \star \mathscr{I}_1 = 1 \otimes \sum_{\alpha, \beta}\lambda_{\alpha, \beta} ([b, \alpha](0)+ k(b \mid \alpha) x_{\alpha, \beta}\beta(1) =1 \otimes b(1). 
\]

Let $b = e$. For the first sum, using
\([e,e]=0\), \([e,f]=h\), \([e,h]=-2e\), we obtain
\[
\begin{aligned}
&\sum_{\alpha,\beta} \lambda_{\alpha,\beta}[e, \alpha](0)x_{\alpha, \beta}\beta(1) \\
&= \sum_{\beta}\lambda_{e,\beta}[e, e](0)x_{e, \beta}\beta(1)
 + \sum_{\beta}\lambda_{f,\beta}[e, f](0)x_{f, \beta}\beta(1)
 + \sum_{\beta}\lambda_{h,\beta}[e, h](0)x_{h, \beta}\beta(1) \\
&= \sum_{\beta}\lambda_{f,\beta}h(0)x_{f, \beta}\beta(1)
   -2\sum_{\beta}\lambda_{h,\beta}e(0)x_{h, \beta}\beta(1) \\
&= \lambda_{f,e}h(0)x_{f,e}e(1) + \lambda_{f,f}h(0)x_{f,f}f(1) + \lambda_{f,h}h(0)x_{f,h}h(1) -2\lambda_{h,e}e(0)x_{h,e}e(1) \\ &-2\lambda_{h,f}e(0)x_{h,f}f(1) - 2\lambda_{h,h}e(0)x_{h,h}h(1).
\end{aligned}
\]

Recall $(e\mid f)=1$ and $(e\mid\alpha)=0$ for $\alpha \neq f$, so for the second sum, we have
\[
\sum_{\alpha,\beta} \lambda_{\alpha,\beta}k(e\mid\alpha)x_{\alpha, \beta}\beta(1)
= k(\lambda_{f,e}x_{f, e}e(1)+\lambda_{f,f}x_{f,f}f(1)+\lambda_{f,h}x_{f,h}h(1)).
\] Putting the two sums together, then the following equation must be true
\begin{align*}
&1 \otimes e(1) \\
&= \lambda_{f,e}h(0)\otimes e(1)
 + \lambda_{f,f}hx_{f,f}\otimes f(1)+\lambda_{f,h}hx_{f, h} \otimes h(1) -2\lambda_{h,e}ex_{h, e} \otimes e(1) \\ &\ \ -2\lambda_{h,f}ex_{h,f} \otimes f(1)  -2\lambda_{h,h}ex_{h,h} \otimes h(1)
  \\ &+ k\lambda_{f,f}x_{f,f}\otimes f(1)+k\lambda_{f,h}x_{f,h}\otimes h(1) +k\lambda_{f,e}x_{f,e}\otimes e(1) \\
 &= (\lambda_{f,e}hx_{f,e} -2 \lambda_{h,e} ex_{h,e} + k \lambda_{f,e}x_{f,e}) \otimes e(1)  +  (\lambda_{f,f}hx_{f,f} - 2 \lambda_{h,f} ex_{h,f} + k \lambda_{f,f}x_{f,f})\otimes f(1)  \\ &\ \ +(\lambda_{f,h} hx_{f,h} - 2 \lambda_{h,h} e x_{h,h} + k \lambda_{f,h}x_{f,h}) \otimes h(1)
\end{align*} By \cref{linearInd}, each term in the set $\{1 \otimes e(1), 1 \otimes h(1), 1 \otimes f(1)\}$ is linearly independent over $\A$, so we obtain the condition  
\begin{equation} \label{contradiction}\lambda_{f,e}hx_{f,e} + k \lambda_{f,e}x_{f,e} -2 \lambda_{h,e}e x_{h,e} 
 = \lambda_{f,e} (k+h)x_{f,e}- 2 \lambda_{h,e} e x_{h,e} = 1.\end{equation} Consider a highest weight $\sl$-module $L(k)$ with highest weight $k \in \mathbb{C}$ where $k \neq 2$ and $v_k$ is a highest weight vector.  Define an anti-involution $\theta: U(\sl) \to U(\sl)$ defined on the generators by $X \mapsto -X$ for $X\in \sl$. The anti-involution $\theta$ provides a right $\sl$-module structure on $L(k)$ with the actions $v_k e = 0$ and $v_k h= -kv_k$.  Substituting our condition from \ref{contradiction} to
 \begin{equation*}
     v_k = v_k \cdot 1= v_k \cdot (\lambda_{f,e} (k+h)x_{f,e}- 2 \lambda_{h,e} e x_{h,e})  =0
 \end{equation*}
 gives us a contradiction because the highest weight vector must be nonzero.
 
\end{proof}
\section{Strong Unital Property For The Affine Module VOA \texorpdfstring{$L_{\widehat{\mathfrak{sl}}_2}(1,0)$}{}}

\subsection{Definition of affine module VOAs}
    Let $J_k$ denote the unique proper, maximal ideal of $V_{\hat{\mathfrak g}}(k,0)$ then the quotient 
    \[
    L_{\hat{\mathfrak g}}(k,0) = V_{\hat{\mathfrak g}}(k, 0)/ J_k
    \]
    is called the simple affine VOA of level $k$. The ideal $J_k$ is also the unique largest proper $\hat{\mathfrak g}$-module of the vacuum module VOA, and $L_{\hat{\mathfrak g}}(k,0)$ is an irreducible $\hat{\mathfrak g}$-module \cite{LL04}. For the case $\mathfrak{g} = \sl$ and level $k = 1$, the simple affine VOA $L_{\hatsl}(1,0) = V_{\hatsl}(1,0) / \langle e(-1)e(-1)\vac \rangle$ where $\langle e(-1)e(-1) \vac \rangle = U(\hatsl) \cdot e(-1)e(-1)\vac$  \cites{K74,FK80}.
    \begin{lemma}\cite{FZ92}
        The Zhu algebra $A(L_{\hatsl}(1,0))$ is isomorphic to $U(\sl)/\langle e^2 \rangle$.
    \end{lemma}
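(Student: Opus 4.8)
The plan is to deduce this from the level-zero computation $A(V_{\hatsl}(1,0)) \cong U(\sl)$ already recorded above, together with the compatibility of Zhu's functor with quotients. Recall that $L_{\hatsl}(1,0) = V_{\hatsl}(1,0)/J_1$ with $J_1 = \langle e(-1)e(-1)\vac\rangle$. The general theory of Zhu's algebra supplies, for any VOA ideal $I \subseteq V$, an isomorphism $A(V/I) \cong A(V)/A(I)$, where $A(I) = (I + O(V))/O(V)$ is the image of $I$ in $A(V)$; since $I$ is a VOA ideal, this image is automatically a two-sided ideal. Applying this with $V = V_{\hatsl}(1,0)$ and $I = J_1$ reduces the lemma to identifying the two-sided ideal $A(J_1) \subseteq A(V_{\hatsl}(1,0)) \cong U(\sl)$ with $\langle e^2\rangle$.

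First I would compute the image of the generating singular vector under the identification $A(V_{\hatsl}(1,0)) \cong U(\sl)$, which sends $[a(-1)\vac] \mapsto a$ and the $*$-product to multiplication in $U(\sl)$. Taking $u = v = e(-1)\vac$, which has weight $1$, the formula $a*b = \sum_{i\geq0}\binom{\mathrm{wt}(a)}{i}a_{i-1}b$ gives
\[
u * v = e(-1)v + e(0)v = e(-1)e(-1)\vac + e(0)e(-1)\vac .
\]
Because $[e,e] = 0$ and $e(0)\vac = 0$, the bracket relation in $\hatsl$ forces $e(0)e(-1)\vac = 0$, so $u*v = e(-1)e(-1)\vac$, and hence $[e(-1)e(-1)\vac] = [u]*[v] = e \cdot e = e^2$. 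Since $A(J_1)$ is a two-sided ideal containing $e^2$, this already yields $\langle e^2\rangle \subseteq A(J_1)$.

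The reverse inclusion $A(J_1) \subseteq \langle e^2\rangle$ is the main obstacle: it is really the assertion that the image in $A(V)$ of the VOA ideal generated by $w = e(-1)e(-1)\vac$ coincides with the two-sided associative ideal generated by $[w] = e^2$. Since $J_1$ is spanned by the modes $a_n w$ with $a \in V_{\hatsl}(1,0)$ and $n \in \Z$, it suffices to check that every class $[a_n w]$ lies in $\langle e^2\rangle$. The route I would take is the standard Zhu reduction: the relations $a \circ b \in O(V)$ reduce each negative mode $a_{-m}w$ with $m \geq 2$ to the modes $a_{i-1}w$ with $i \geq 0$; the $*$-product identities then express $[a_{-1}w]$ through $[a]*[w]$ and the finitely many non-negative modes $[a_i w]$; and these last are handled by skew-symmetry together with the congruence $(L(-1)+L(0))a \in O(V)$, so that an induction on conformal weight confines every $[a_n w]$ to $A(V)*[w]*A(V) = \langle e^2\rangle$. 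Carrying out this reduction carefully, rather than re-deriving the full ideal correspondence, is where the bookkeeping lies; alternatively one may simply invoke \cite{FZ92}*{Theorem 3.1.2} with $\mathfrak{g} = \sl$, $k=1$, and longest root vector $e_\theta = e$, which gives $A(L_{\hatsl}(1,0)) \cong U(\sl)/\langle e^{k+1}\rangle = U(\sl)/\langle e^2\rangle$ directly.
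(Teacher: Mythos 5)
The paper gives no proof of this lemma at all---it is stated as a citation to \cite{FZ92}, i.e.\ Frenkel--Zhu's Theorem 3.1.2 specialized to $\mathfrak{g}=\mathfrak{sl}_2$, $k=1$, $e_\theta=e$, which is exactly the fallback you offer in your last sentence. Your additional sketch is sound where it is explicit: the identification $A(V_{\hatsl}(1,0))\cong U(\sl)$, the functoriality $A(V/I)\cong A(V)/A(I)$, the computation $[e(-1)e(-1)\vac]=e*e=e^2$ (using $\wt(e(-1)\vac)=1$ and $e(0)e(-1)\vac=[e,e](-1)\vac=0$), and the resulting inclusion $\langle e^2\rangle\subseteq A(J_1)$ are all correct. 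The one caution concerns the reverse inclusion $A(J_1)\subseteq\langle e^2\rangle$, which you rightly flag as the main obstacle: the "pure bookkeeping" route you outline (reducing all modes $[a_nw]$ into $A(V)*[w]*A(V)$ via the $O(V)$ relations and skew-symmetry) is more delicate than it looks---extracting the individual nonnegative modes $[a_iw]$ from the commutator identities requires a nondegeneracy argument that is not automatic, and in general the image of a VOA ideal generated by $w$ can a priori be strictly larger than the associative ideal generated by $[w]$. Frenkel--Zhu close this gap not by mode reduction but by representation theory: they show the surjection $U(\sl)/\langle e^2\rangle\twoheadrightarrow A(L_{\hatsl}(1,0))$ is injective by matching the irreducible modules of both algebras with the tops of the integrable level-one $\hatsl$-modules. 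So your argument is correct in outline, but if you want to avoid the citation entirely you should replace the "standard Zhu reduction" step with that module-theoretic injectivity argument; otherwise, invoking \cite{FZ92}*{Theorem 3.1.2} is precisely what the paper does.
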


\subsection{The universal enveloping algebra of the affine VOA for \texorpdfstring{$\sl$}{}}
 In this section, we prove one of our main theorems by establishing the isomorphism 
\[
\scrU\bigl(L_{\widehat{\mathfrak{sl}}_2}(1,0)\bigr)
\cong
\widetilde{U}(\widehat{\mathfrak{sl}}_2, 1)\big/ \overline{\langle e(-1)e(-1)\rangle}.
\] where $
\langle e(-1)e(-1)\rangle = U(\widehat{\mathfrak{sl}}_2, 1)\,e(-1)e(-1)\,U(\widehat{\mathfrak{sl}}_2, 1) $
is the two–sided ideal of $U(\widehat{\mathfrak{sl}}_2, 1)$ generated by $e(-1)e(-1)$. Then, we will derive a complete set of relations in this quotient and use them to
describe an explicit spanning set for the universal enveloping algebra of $L_{\hatsl}(1,0)$.

For the proofs, we will work entirely within the quotient algebra $\widetilde{U}(\widehat{\mathfrak{sl}}_2, 1)\big/\langle \overline{e(-1)e(-1)}\rangle$, meaning we will denote the equivalence class $[a(n)] \in \widetilde{U}(\widehat{\mathfrak{sl}}_2, 1)\big/\langle \overline{e(-1)e(-1)}\rangle$ simply as $a(n)$.

\begin{lemma}\label[lemma]{baseCase}
Consider the quotient algebra $\widetilde{U}(\widehat{\mathfrak{sl}}_2, 1)\big/\langle \overline{e(-1)e(-1)}\rangle
$, then \(e(k)e(-1) = 0\) for all $k \in \mathbb{Z}$.
\end{lemma}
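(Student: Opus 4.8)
The plan is to exploit the fact that all positive root modes commute, which reduces the two-sided ideal condition to a single index shift. The key structural input is the affine bracket $[e(m),e(n)] = [e,e](m+n) + m\delta_{m+n,0}(e|e)K$, which vanishes identically since $[e,e]=0$ and $(e|e)=0$ for the normalized invariant form on $\mathfrak{sl}_2$. Thus $e(m)e(n)=e(n)e(m)$ for all $m,n$, and in particular the central level-$1$ term never interferes. Since the two-sided ideal generated by $e(-1)e(-1)$ is stable under taking commutators with any element of $\widetilde{U}(\widehat{\mathfrak{sl}}_2,1)$, I will repeatedly commute the generator against the Cartan modes $h(n)$ to slide one of its two indices to an arbitrary value.

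First I would compute the commutator of $h(n)$ with the generator. Using the Leibniz rule together with $[h(n),e(-1)] = 2e(n-1)$ (which follows from $[h(n),e(-1)]=[h,e](n-1)+n\delta_{n-1,0}(h|e)K=2e(n-1)$, as $(h|e)=0$), I obtain
\[
[h(n), e(-1)e(-1)] = [h(n),e(-1)]\,e(-1) + e(-1)\,[h(n),e(-1)] = 2e(n-1)e(-1) + 2e(-1)e(n-1).
\]
Invoking commutativity $e(-1)e(n-1)=e(n-1)e(-1)$, this collapses to $4\,e(n-1)e(-1)$.

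Next I would observe that the left-hand side equals $h(n)\,e(-1)e(-1) - e(-1)e(-1)\,h(n)$, a difference of a left multiple and a right multiple of $e(-1)e(-1)$, hence an element of the two-sided ideal. Therefore $4\,e(n-1)e(-1)$ lies in the ideal, so $e(n-1)e(-1)=0$ in the quotient. As $n$ ranges over all of $\mathbb{Z}$, the index $k=n-1$ also ranges over all of $\mathbb{Z}$, yielding $e(k)e(-1)=0$ for every $k\in\mathbb{Z}$, as claimed.

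The argument is purely algebraic and does not require passing to any closure, since I use only that the ideal is closed under left and right multiplication; enlarging it to its closure can only help. There is no serious obstacle here: the single point that genuinely matters is recording that the $e$-modes commute before running the Leibniz computation, as it is precisely the vanishing of $[e(m),e(n)]$ that merges the two cross terms into one multiple of $e(n-1)e(-1)$. This lemma then serves as the base case for the subsequent propagation to all products $e(m)e(n)$.
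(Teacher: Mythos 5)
Your argument is correct and follows essentially the same route as the paper's proof: commute $h(n)$ against the generator $e(-1)e(-1)$, use that a two-sided ideal is closed under commutators, and use the commutativity of the $e$-modes (since $[e,e]=0$ and $(e|e)=0$) to collapse the two cross terms into $4\,e(n-1)e(-1)$. Your additional remarks verifying $[e(m),e(n)]=0$ explicitly and noting that passing to the closure is harmless are sensible elaborations of the same computation.
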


\begin{proof}
Fix $k \in \mathbb{Z}$, then we know the Lie bracket
\begin{align*}
[h(k + 1), e(-1)e(-1)] &= [h(k + 1), e(-1)]e(-1) + e(-1)[h(k + 1), e(-1)] \\
&= 2e(k)e(-1) + 2e(-1)e(k)
\end{align*} is equal to $0$. Since the $e$ terms commute, then $2e(k)e(-1) + 2e(-1)e(k) = 4e(k)e(-1) = 0$ implies $e(k)e(-1) = 0$.
\end{proof}
    \begin{corollary}\label[lemma]{allZero}
        Consider the quotient algebra $\widetilde{U}(\widehat{\mathfrak{sl}}_2, 1)\big/\langle \overline{e(-1)e(-1)}\rangle$, then 
        $e(k)e(l) = 0$ for all $k, l \in \mathbb{Z}$
    \end{corollary}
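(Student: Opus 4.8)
The plan is to bootstrap from \cref{baseCase}, which already establishes $e(k)e(-1)=0$ for all $k\in\Z$, by applying the adjoint action of $h(r)$ a single time. The mechanism is identical to the one used in the base case: a two-sided ideal of an associative algebra is closed under commutators, so the bracket of a relation that already holds (i.e.\ equals zero in the quotient) must again vanish, and expanding that bracket by the Leibniz rule $[a,bc]=[a,b]c+b[a,c]$ produces a new relation for free.

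Concretely, I would fix arbitrary $k,r\in\Z$ and compute
\[
[h(r),\,e(k)e(-1)] = [h(r),e(k)]\,e(-1) + e(k)\,[h(r),e(-1)] = 2\,e(r+k)e(-1) + 2\,e(k)e(r-1),
\]
using the affine bracket $[h(r),e(m)] = [h,e](r+m) + r\delta_{r+m,0}(h|e)K = 2\,e(r+m)$, since $[h,e]=2e$ and $(h|e)=0$ so the central term drops. By \cref{baseCase} the element $e(k)e(-1)$ is already zero in the quotient, so the left-hand side is $[h(r),0]=0$; the term $e(r+k)e(-1)$ on the right also vanishes by \cref{baseCase} (it again has second index $-1$). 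Hence $2\,e(k)e(r-1)=0$, and therefore $e(k)e(r-1)=0$.

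Finally I would let $r$ range over all of $\Z$, so that $l:=r-1$ ranges over all of $\Z$ while $k$ stays arbitrary, yielding $e(k)e(l)=0$ for all $k,l\in\Z$. Because the argument closes after a single bracket computation, there is no real obstacle beyond bookkeeping; the one point worth stating carefully is that the vanishing of $e(k)e(-1)$ for \emph{every} first index $k$ is exactly what makes both the left-hand side and the first right-hand term drop out simultaneously, so that no separate induction on the index is needed.
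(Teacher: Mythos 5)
Your proposal is correct and follows essentially the same route as the paper's proof: apply $\mathrm{ad}_{h(r)}$ to the relation $e(k)e(-1)=0$ from \cref{baseCase}, expand by the Leibniz rule, kill the $e(k+r)e(-1)$ term using \cref{baseCase} again, and conclude $e(k)e(r-1)=0$ with $l=r-1$. Your explicit check that the central term vanishes because $(h\mid e)=0$ is a small point of added care not spelled out in the paper, but the argument is otherwise identical.
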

    
    \begin{proof}
        Fix $k, l\in \mathbb{Z}$, then we know the Lie bracket 
        \begin{align*}
        [h(l+1), e(k)e(-1)] &= [h(l+1), e(k)]e(-1) + e(k)[h(l+1), e(-1)] \\
        &= 2e(k+l+1)e(-1) + 2e(k)e(l)
        \end{align*} is equal to $0$. By \cref{baseCase}, the term $2e(k+l+1)e(-1) = 0$ which leaves $e(k)e(l) = 0$.
    \end{proof} 
    \begin{theorem}\label{Uofsl2}
     There exists a continuous isomorphism between topological associative algebras
\[
\tilde{\psi}:
\scrU(L_{\widehat{\mathfrak{sl}}_2}(1,0))
\overset{\cong}\to 
\widetilde{U}(\widehat{\mathfrak{sl}}_2, 1) / \overline{\langle e(-1)e(-1) \rangle},
\]
where $\overline{\langle e(-1)e(-1)\rangle}$ is the closure of the two-sided ideal generated by $e(-1)e(-1)\in \widetilde{U}(\widehat{\mathfrak{sl}}_2, 1)$. 
    \end{theorem}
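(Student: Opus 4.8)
The plan is to show that the isomorphism $\phi\colon \scrU(V_{\hatsl}(1,0)) \xrightarrow{\cong} \widetilde{U}(\hatsl,1)$ from \cref{isomPhi} descends to an isomorphism of the respective quotients. Recall that $L_{\hatsl}(1,0) = V_{\hatsl}(1,0)/\langle e(-1)e(-1)\vac\rangle$, and that the universal enveloping algebra is functorial under such quotients: passing from $V$ to a quotient $V/J$ corresponds on the level of $\scrU$ to quotienting by the closed two-sided ideal generated by the modes of the generators of $J$. Concretely, $\scrU(L_{\hatsl}(1,0))$ is the quotient of $\scrU(V_{\hatsl}(1,0))$ by the closure of the two-sided ideal generated by all Fourier modes $(e(-1)e(-1)\vac)_{[m]}$, $m \in \Z$. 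The main task is therefore to identify the image under $\phi$ of this ideal with $\overline{\langle e(-1)e(-1)\rangle}$.

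First I would compute $\phi((e(-1)e(-1)\vac)_{[m]})$ explicitly. Using the field realization $Y(e(-1)e(-1)\vac,z) = {:}e(z)e(z){:}$, the $m$-th mode is the normally ordered expression $\sum_{j} {:}e(j)e(m-1-j){:}$, which lands in $\widetilde{U}(\hatsl,1)$ as a convergent sum of products $e(j)e(m-1-j)$. I want to argue that the closed two-sided ideal generated by all these modes coincides with the closed two-sided ideal $\overline{\langle e(-1)e(-1)\rangle}$ generated by the single element $e(-1)e(-1)$. One containment is clear: $e(-1)e(-1)$ is (up to the normal-ordering bookkeeping) the lowest mode $(e(-1)e(-1)\vac)_{[-1]}$, so $\overline{\langle e(-1)e(-1)\rangle}$ is contained in the image ideal. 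The reverse containment is exactly what \cref{baseCase,allZero} provide: working in the quotient $\widetilde{U}(\hatsl,1)/\overline{\langle e(-1)e(-1)\rangle}$, the bracket computations show $e(k)e(l)=0$ for all $k,l\in\Z$, hence every normally ordered mode $\sum_j e(j)e(m-1-j)$ already vanishes in this quotient. Thus every generator of the image ideal lies in $\overline{\langle e(-1)e(-1)\rangle}$, giving equality of the two ideals.

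With the ideals identified, the isomorphism $\phi$ induces a continuous algebra homomorphism $\tilde\psi$ on the quotients, and its continuous inverse is induced from $\phi^{-1}$ since the inverse carries the generator $e(-1)e(-1)$ back to the mode $(e(-1)e(-1)\vac)_{[-1]}$ that generates the ideal defining $\scrU(L_{\hatsl}(1,0))$. Both maps are continuous with respect to the quotient seminorms (the neighborhood filtrations descend to the quotients), so $\tilde\psi$ is an isomorphism of topological associative algebras.

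I expect the main obstacle to be the bookkeeping of \emph{closures} and \emph{convergence}, rather than the algebra. The two-sided ideals in question are defined as topological closures, and one must check that $\phi$ and $\phi^{-1}$, being continuous, send the closure of one ideal onto the closure of the other; this requires verifying that the image of a dense generating set is dense in the target ideal and that the seminorm filtrations $\NN^n_\L$ are respected under the quotient. The delicate point is that the mode $(e(-1)e(-1)\vac)_{[m]}$ is an infinite normally ordered sum, so the equality of ideals is genuinely a statement about closed ideals, and \cref{allZero} must be invoked term-by-term (together with continuity) to conclude that the entire convergent sum lies in $\overline{\langle e(-1)e(-1)\rangle}$. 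Once the continuity and closure arguments are set up carefully, the descent of $\phi$ to $\tilde\psi$ and the construction of its inverse are formal.
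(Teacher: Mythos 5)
Your overall strategy matches the paper's: transport the isomorphism $\phi$ of \cref{isomPhi} to the quotients, compute $\phi\bigl((e(-1)e(-1)\vac)_{[m]}\bigr)=\sum_{k+l=m-1}\normord{e(k)e(l)}$, and invoke \cref{baseCase,allZero} to see that these images vanish in $\widetilde{U}(\hatsl,1)/\overline{\langle e(-1)e(-1)\rangle}$, so that $\phi$ descends. However, two steps you treat as known or clear are precisely where the work lies. First, the ``functoriality'' claim that $\scrU(V/J)$ equals $\scrU(V)$ modulo the closed two-sided ideal generated by the modes of a \emph{single generator} of $J$ is not a black box you can cite: the kernel is a priori generated by the modes of \emph{all} elements of the submodule $J=U(\hatsl)\cdot e(-1)e(-1)\vac$. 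The paper earns this reduction by identifying $L(L_{\hatsl}(1,0))$ with $L(V_{\hatsl}(1,0))/L(\langle e(-1)e(-1)\vac\rangle)$ (checking that $L(-1)$ preserves the submodule) and then running an induction on the length of $a_{i_1}(n_1)\cdots a_{i_k}(n_k)e(-1)e(-1)\vac$, using the Jacobi identity to express the modes of a general element of $J$ as two-sided combinations of the modes of $e(-1)e(-1)\vac$. Your proposal needs this induction (or an equivalent lemma) spelled out.

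Second, your argument for the reverse containment is incorrect as stated. The mode $(e(-1)e(-1)\vac)_{[-1]}$ is the infinite sum $e(-1)e(-1)+2\sum_{j\ge 0}e(-2-j)e(j)$, not the single term $e(-1)e(-1)$, and it is not evident how to extract that single term from the closed ideal generated by these sums: bracketing with $h(r)$ merely sends $\sum_{k+l=m-1}\normord{e(k)e(l)}$ to a multiple of $\sum_{k+l=m+r-1}\normord{e(k)e(l)}$, so the individual summands are never isolated. For the same reason, $\phi^{-1}(e(-1)e(-1))=(e(-1)\vac)_{[-1]}(e(-1)\vac)_{[-1]}$ is a product of two single modes and is \emph{not} the mode $(e(-1)e(-1)\vac)_{[-1]}$ of the product state, so your description of the inverse does not show it is well defined on the quotient. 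The paper avoids asserting equality of the two ideals altogether: it defines the candidate inverse $\omega$ only on the degree-one generators, $x(n)\mapsto(\overline{x(-1)\vac})_{[n]}$, where there is no normal-ordering discrepancy, and then concludes bijectivity from surjectivity of $\omega$ together with $\tilde\psi\circ\omega=\mathrm{id}$. You should either adopt that route or give an actual argument that $e(-1)e(-1)$ itself lies in the image ideal; as written, this step would fail.
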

    \begin{proof}
        Let $V = V_{\hatsl}(1,0)$ and $I = \langle e(-1)e(-1)\vac \rangle$. Consider the induced image of the quotient  $\overline \partial: V/I  \otimes  \mathbb{C}((t)) \to V/I \otimes \mathbb{C}((t)), [v] \otimes t^m \mapsto [L(-1)v] \otimes t^m + [v] \otimes mt^{m-1}$. This image is well-defined because $L(-1)$ viewed in the form of  \cref{L(-1)Operator}
        preserves the ideal generated by $e(-1)e(-1)\vac$. 
        The completion of the Borcherd's Lie algebra for $L_{\hatsl}(1,0)$ can be expressed as
        \begin{align*}
            L(L_{\widehat {\sl}}(1,0)) &= \frac{L_{\widehat {\sl}}(1,0) \otimes \mathbb{C}((t))}{\Im \overline{\partial}}  = \frac{V_{\widehat {\sl}}(1,0)/\langle e(-1)e(-1) \vac \rangle \otimes \mathbb{C}((t))}{\Im \overline{\partial}}
        \end{align*}  where $\Im \overline \partial$ is viewed as the quotient space  
        \[
        \Im \overline \partial \cong \frac{ \Im \partial + \langle e(-1)e(-1)\vac \rangle \otimes \mathbb{C}((t))}{\langle e(-1)e(-1)\vac \rangle \otimes \mathbb{C}((t))}.
        \] Then we have the following canonical vector space isomorphism
        \begin{align*}
            L(L_{\widehat {\sl}}(1,0))&\cong \frac{V_{\widehat {\sl}}(1,0) \otimes \mathbb{C}((t)) / \langle e(-1)e(-1)\vac \rangle \otimes \mathbb{C}((t))}{ \Im \partial + \langle e(-1)e(-1)\vac \rangle \otimes \mathbb{C}((t))/\langle e(-1)e(-1)\vac \rangle \otimes \mathbb{C}((t))} \\
            &\cong  \frac{V_{\widehat {\sl}}(1,0) \otimes \mathbb{C}((t))}{\Im \partial + \langle e(-1)e(-1)\vac \rangle} \\
            &\cong \frac{V_{\widehat {\sl}}(1,0) \otimes \mathbb{C}((t)) / \Im \partial }{\Im \partial + \langle e(-1)e(-1)\vac \rangle / \Im \partial} \\
            &\cong \frac{L(V_{\widehat {\sl}}(1,0))}{L(\langle e(-1)e(-1)\vac \rangle)} 
        \end{align*}
         For the linear map $f: L(V_{\hat{\mathfrak{g}}}(k,0)) \to \widetilde{U}(\hat{\mathfrak{g}}, k), {A}_{[m]} \mapsto {A}_m$ to descend to the quotient map 
         \[
         g: \frac{L(V_{\widehat {\sl}}(1,0))}{L(\langle e(-1)e(-1)\vac \rangle)} \to \widetilde{U}(\hatsl, 1)/ \overline{\langle e(-1)e(-1) \rangle}, \overline{A_{[m]}} \mapsto \overline{f(A_{[m]})},
         \] we need to show $f(L(\langle e(-1)e(-1)\vac)) \subseteq \langle e(-1)e(-1)\rangle$ for all $m \geq 0$. Since the Borcherd Lie algebra of the ideal generated by $\langle e(-1)e(-1)\vac \rangle$ is spanned by elements of the form $(U(\widehat{\mathfrak{sl}}_2, 1)\cdot e(-1)e(-1)\vac)_{[m]}$ for any $m \in \mathbb{Z}$, we'll proceed by induction on the length $k$ of its monomial elements
         \[(a_{i_1}(n_1) \hdots a_{i_k}(n_k) \cdot e(-1)e(-1)\vac)_{[m]} \in (U(\widehat{\mathfrak{sl}}_2, 1)\cdot e(-1)e(-1)\vac)_{[m]}.\] For the base case $k=0$, 
\begin{align*}
    f((e(-1)e(-1)\vac)_{[m]})
    &= \Res_z\, Y(e(-1)e(-1)\vac,z)\,z^m \\
    &= \Res_z\, :e(z)e(z):\,z^m \\
    &= \Res_z \left( \sum_{n\in\Z} \Big(\sum_{k+l=n} \normord{e(k) e(l)}\Big)
        z^{-n-2} z^m \right).
\end{align*}
The residue is the coefficient of $z^{-1}$, so we require $n=m-1$, and hence
\begin{equation}
\label{CheckIdeal}
f((e(-1)e(-1)\vac)_{[m]})
    = \sum_{k+\ell=m-1} \normord{e(k) e(l)}
\end{equation}
which vanishes in the quotient by \cref{allZero}. Assume   \[f((a_{i_1}(n_1) \hdots a_{i_l}(n_l)e(-1)e(-1)\vac)_{[m]}) = \overline 0\] for $l = k$ and all $m \in \mathbb{Z}$ and we want to show this holds for $l = k+1$ and all $m$. By the Jacobi identity,
\begin{align*}
    &f \bigl( a_{i_0}(n_0)a_{i_1}(n_1) \hdots a_{i_k}(n_k)e(-1)e(-1)\vac)_{[m]}\bigr) \\
    &= \bigl(a_{i_0}(n_0)a_{i_1}(n_1) \hdots a_{i_k}(n_k)e(-1)e(-1)\vac \bigr)_{m} \\
    &= \bigl(((a_{i_0}(-1)\vac)_{n_0})a_{i_1}(n_1) \hdots a_{i_k}(n_k)e(-1)e(-1)\vac \bigr)_{m} \\
    &= \sum_{j \geq 0} \binom{n_0}{j}(-1)^j \bigl(a_{i_0}(-1)\vac\bigr)_{n_0-j}\bigl(a_{i_1}(n_1) \hdots a_{i_k}(n_k)e(-1)e(-1)\vac \bigr)_{m+j}   \\ &- \sum_{j \geq 0} (-1)^{n_0+j} \binom{n_0}{j} \bigl(a_{i_1}(n_1) \hdots a_{i_k}(n_k)e(-1)e(-1)\vac \bigr)_{m+n_0-j} \bigl(a_{i_0}(-1)\vac \bigr)_j.
\end{align*}
The inductive hypothesis implies both terms must be in $\langle e(-1)e(-1) \rangle =U(\widehat{\mathfrak{sl}}_2, 1) \cdot  e(-1)e(-1) \cdot U(\widehat{\mathfrak{sl}}_2, 1)$ which proves $g$ is well-defined. We can further extend the map \[L(L_{\hatsl}(1,0)) \xrightarrow[]{\cong} \frac{L(V_{\widehat {\sl}}(1,0))}{L(\langle e(-1)e(-1)\vac \rangle)} \xrightarrow[]{g} \widetilde{U}(\hatsl, 1)/ \overline{\langle e(-1)e(-1) \rangle} \] to a canonical map between universal enveloping algebras 
\[
\psi: U(L(V_{\hatsl}(1,0))) \xrightarrow[]{\cong} \frac{U(L(V_{\hatsl}(1,0)))}{\langle L(\langle e(-1)e(-1)\vac \rangle} \to \widetilde{U}(\hatsl, 1)/ \overline{\langle e(-1)e(-1) \rangle}
\] where $\langle L(\langle e(-1)e(-1)\vac) \rangle = U(L(V_{\hatsl}(1,0))) \cdot L(\langle e(-1)e(-1)\vac \rangle) \cdot U(L(V_{\hatsl}(1,0)))$.

Since $\psi(\NN^{n+1}_\L U(L(V_{\hatsl}(1,0))))\ssq I_{n+1}(\widetilde{U}(\hatsl, 1))$, the universal property of completions induces a continuous map between topological associative algebras
\[
\widetilde{\psi}: \scrU(L_{\hatsl}(1,0)) \to 
\widetilde{U}(\hatsl, 1)/ \overline{\langle e(-1)e(-1) \rangle}.
\] 

Consider the map defined on generators
\[\omega: \widetilde{U}(\hatsl,1)/ \overline{\langle e(-1)e(-1) \rangle} \to \scrU(L_{\hatsl}(1,0)), x(n) \mapsto (\overline{x(-1)\vac})_{[n]}
\] and extended linearly, which is surjective because it was induced from a surjective map $\widetilde{U}(\hatsl, 1) \to \scrU(V_{\hatsl}(1,0))$. Since the $n$-th Fourier coefficient of $\omega(x(n)) = (x(-1)\vac)_{[n]}$ is exactly $x(n)$, then $\widetilde{\psi}\circ \omega =  id$.
\end{proof}

\begin{theorem}\label{relationsforUsl2}
  The associative algebra $\scrU(L_{\widehat{\mathfrak{sl}}_2}(1,0))$ is spanned by the set 
\[
\operatorname{span}\bigl\{1,\  e(m),\ f(n),\ h(r),\ h(n_1)h(n_2) \,\bigm|\, m,n,r,n_1,n_2 \in \mathbb{Z}\}.
\]  where we denote $e(m), f(n), h(r), h(n_1)h(n_2)$ by its equivalence class.
Moreover, we have the following product relations among  $e(m), f(n)$ and $h(r)$: 
    \begin{equation}
    \begin{aligned} \label{eq:relations}
    & e(x)e(y)=f(x)f(y)=0, && e(x)h(y)=-e(x+y),\\
    & h(x)f(y)=-f(x+y),   && h(x)h(y)+h(x+y)=2e(r)f(x+y-r).
    \end{aligned}
    \end{equation}Under the appropriate Lie bracket relations, we also obtain
    \begin{equation}
    \begin{aligned}\label{eq:43relations}
    & h(x)e(y)=e(x+y),      && f(x)h(y)=f(x+y),\\
    & h(x)h(y)-h(x+y) = 2f(x+y-r)e(r).
    \end{aligned}
    \end{equation}
for all $x,y,r\in \Z$. It follows the subset is dense in the topological algebra $\mathscr{U}(L_{\hatsl}(1,0))$.
\end{theorem}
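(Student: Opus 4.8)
The plan is to carry out every computation inside the quotient $\widetilde{U}(\widehat{\mathfrak{sl}}_2,1)/\overline{\langle e(-1)e(-1)\rangle}$, which by \cref{Uofsl2} is identified with $\scrU(L_{\widehat{\mathfrak{sl}}_2}(1,0))$; under this identification the images of $e(n),f(n),h(r)$ still obey the level-one affine relations, in particular $[h(m),e(n)]=2e(m+n)$, $[h(m),f(n)]=-2f(m+n)$, $[e(m),f(n)]=h(m+n)+m\delta_{m+n,0}$, and $[e(m),e(n)]=[f(m),f(n)]=0$. The two vanishing relations $e(x)e(y)=0$ are exactly \cref{allZero}. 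For $f(x)f(y)=0$ I would invoke the Chevalley involution $\sigma$ of $\widehat{\mathfrak{sl}}_2$ sending $e\mapsto f$, $f\mapsto e$, $h\mapsto -h$ and fixing $K$: since $\sigma$ preserves the bilinear form and the level it extends to $\widetilde{U}(\widehat{\mathfrak{sl}}_2,1)$ and sends $e(-1)e(-1)$ to $f(-1)f(-1)$. Because the closed two-sided ideal we quotient by corresponds to the unique maximal ideal $J_1$ of the VOA, which is $\sigma$-stable, $\sigma$ descends to the quotient, and applying it to \cref{allZero} yields $f(x)f(y)=0$ (alternatively one repeats the bracketing argument of \cref{baseCase} after first producing $f(-1)f(-1)=0$).

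Next I would derive the degree-lowering relations. Writing $h(y)=e(a)f(b)-f(b)e(a)$ for any $a+b=y\ne 0$ and expanding $e(x)h(y)$ using $e(x)e(a)=0$ together with the bracket $[e,f]$, all the double-$e$ terms drop out and one is left with $e(x)h(y)=-h(x+b)e(a)$ away from the few $b$ for which a central term intervenes. Varying $b$ shows $h(p)e(q)$ depends only on $p+q$, and feeding this back into $[h(p),e(q)]=2e(p+q)$ forces $h(p)e(q)=e(p+q)$, hence $e(x)h(y)=-e(x+y)$ and $h(x)e(y)=e(x+y)$. Applying $\sigma$ then gives $h(x)f(y)=-f(x+y)$ and $f(x)h(y)=f(x+y)$. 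The degenerate cases $y=0$ (and the associated $\delta$-contributions) I would dispose of by direct commutator comparison, using that $(h\mid e)=(h\mid f)=0$, so that no central term ever survives in the final $eh/he/hf/fh$ identities.

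For the $hh$-relation the key observation is that $e(p)f(q)$ depends only on $p+q$: indeed $e(p+1)f(q-1)=-e(p)h(1)f(q-1)=-e(p)\bigl(-f(q)\bigr)=e(p)f(q)$, where I use $e(p)h(1)=-e(p+1)$, $h(1)f(q-1)=-f(q)$, and associativity. Then, writing $h(y)=e(a)f(b)-f(b)e(a)$ and applying the previous step, I would compute $h(x)h(y)=e(x+a)f(b)+f(x+b)e(a)$, commute $f(x+b)e(a)=e(a)f(x+b)-h(x+y)-a\delta_{x+y,0}$, and use the argument-sum invariance of $e(\cdot)f(\cdot)$ to merge the two $ef$-terms into $2e(r)f(x+y-r)$, giving $h(x)h(y)+h(x+y)=2e(r)f(x+y-r)$; the companion identities \eqref{eq:43relations} follow either from $\sigma$ or by commuting $e$ past $f$. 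For the density claim I would then show that, modulo the defining ideal, every monomial in the modes reduces into $\operatorname{span}\{1,e(m),f(n),h(r),h(n_1)h(n_2)\}$: $ee$ and $ff$ vanish, each of $eh,he,hf,fh$ collapses to a single mode, $ef$ and $fe$ collapse via the $hh$-relation to a combination of $h(n_1)h(n_2)$, $h(r)$ and scalars, and an induction on word length (using $h(a)h(b)h(c)=-h(a)h(b+c)+2h(a)e(r)f(b+c-r)$ and $h(a)e(r)=e(a+r)$) reduces any longer word to length at most two; since these reductions live inside the dense subalgebra generated by the modes, the displayed span is dense.

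I expect the main obstacle to be twofold. First, establishing $f(x)f(y)=0$ and the $f$-side relations rigorously: the Chevalley-involution descent hinges on the $\sigma$-invariance of the defining closed ideal, which must be extracted from the uniqueness of $J_1$, and otherwise one is forced into a separate bracketing argument. Second, the careful bookkeeping of the central ($\delta$-function) terms in the degenerate cases, so as to confirm that the clean, $\delta$-free relations in \eqref{eq:relations} and \eqref{eq:43relations} genuinely hold for all integer arguments. By contrast, once the relations are in hand the density step is essentially mechanical, modulo making the reduction-inside-a-dense-subalgebra argument precise with respect to the topology on $\scrU(L_{\widehat{\mathfrak{sl}}_2}(1,0))$.
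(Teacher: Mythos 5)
Your route is genuinely different from the paper's, and for the most part it works, but the order in which you establish $f(x)f(y)=0$ hides a real gap. The paper obtains everything in \eqref{eq:relations} from a single source: it applies the iterated adjoint actions $\mathrm{ad}_{f(k)},\mathrm{ad}_{f(\ell)},\mathrm{ad}_{f(m)}$ to the relation $e(n_1)e(n_2)=0$ and specializes the resulting identities \eqref{firstRel}--\eqref{thirdRel}; in particular $e(x)h(y)=-e(x+y)$, the $hh$-relation, \emph{and} $h(x)f(y)=-f(x+y)$ are all extracted from $ee=0$ alone. The special case $h(0)f(-1)=-f(-1)$ then gives $f(-1)f(-1)=0$ by bracketing with $f(-1)$, and only at that point is the involution $\alpha$ introduced --- its descent to the quotient is legitimate precisely because $\alpha(e(-1)e(-1))=f(-1)f(-1)$ has already been shown to vanish there. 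You invert this order: you want $ff=0$ first, via descent of the Chevalley involution, and you then use $\sigma$ to transport the $he$-relations to the $hf$-relations. Your primary justification (that $\sigma$ preserves the closed ideal because the unique maximal submodule $J_1\subset V_{\hatsl}(1,0)$ is $\sigma$-stable) needs an extra step identifying $\sigma$-stability of $J_1$ with $\sigma$-stability of $\overline{\langle e(-1)e(-1)\rangle}\subset\widetilde{U}(\hatsl,1)$, i.e.\ an actual proof that $f(-1)f(-1)$ lies in that ideal; you assert this rather than prove it. Your fallback --- ``repeat the bracketing argument of \cref{baseCase} after first producing $f(-1)f(-1)=0$'' --- is circular as written, since you offer no independent derivation of $f(-1)f(-1)=0$: in your scheme the $hf$-relation, which is what produces it, sits downstream of $ff=0$. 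The patch is simple and is exactly what the paper does: derive $h(x)f(y)=-f(x+y)$ (or just $h(0)f(-1)=-f(-1)$) directly from $ee=0$ via the triple adjoint action, deduce $f(-1)^2=0$, and only then invoke the involution.

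Two further points of comparison. Your insistence on tracking the central $\delta$-terms is well placed and should not be waved away: the identities \eqref{firstRel}--\eqref{thirdRel} as displayed in the paper silently drop the terms $k\,\delta_{k+n_i,0}(f\mid e)K\,e(n_j)$ arising from $[f(k),e(n_i)]$, and these are nonzero exactly in the degenerate cases you flag, so the ``direct commutator comparison'' you promise there is genuinely needed to justify the relations for \emph{all} integer arguments. Finally, your word-length reduction argument for the density claim is more explicit than the paper's treatment, which establishes the relations but leaves the reduction of longer monomials essentially to the reader.
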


\begin{proof}
 Applying
$\text{ad}_{f(k)},\text{ad}_{f(\ell)},$ then $\text{ad}_{f(m)}$ successively for free variables $m, l, k \in \mathbb{Z}$ to the relation
$e(n_1)e(n_2)=0$ yields the three equations (respectively):
 \begin{equation}\label{firstRel} 
    e(n_2)h(k+n_1) + 2e(k+n_1+n_2) + e(n_1)h(k+n_2) = 0,
  \end{equation}\begin{equation}\begin{aligned}\label{secondRel} 
    0 &= -h(\ell+n_2)h(k+n_1)
    + 2e(n_2)f(\ell+k+n_1)
    - 2h(\ell+k+n_1+n_2)\\
    &- h(\ell+n_1)h(k+n_2)
    + 2e(n_1)f(\ell+k+n_2),
  \end{aligned}\end{equation}
    \begin{equation}\label{thirdRel}
    \begin{aligned} 
    0 &= -2h(k+n_1)f(m+\ell+n_2)
        -2h(\ell+n_2)f(m+k+n_1)
        -2h(k+n_2)f(m+\ell+n_1) \\
      &\quad -2h(\ell+n_1)f(m+k+n_2)
        -2h(m+n_1)f(\ell+k+n_2)
        -2(m+n_2)f(\ell+k+n_1) \\
      &\quad -8f(m+\ell+k+n_1+n_2).
    \end{aligned}\end{equation}

Specializing to \(n_1 = n_2 = r\),  \cref{firstRel} gives
\[
2e(r)h(k+r) + 2e(k+2r) = 0
\quad\Rightarrow\quad
e(r)h(k+r) = -\,e(k+2r).
\]
Setting \(n_1 = n_2 = r\) in \cref{secondRel} and writing
\(x = k+r\), \(y = \ell+r\), we obtain
\[
-2h(y)h(x) - 2h(x+y) + 4e(r)f(x+y-r) = 0 \implies
h(x)h(y) + h(x+y)= 2e(r)f(x+y-r).
\]
Also, \cref{thirdRel} simplifies to
\[
h(k+r)f(2k+r) + f(3k+2r) = 0 \implies
h(k+r)f(2k+r) = -\,f(3k+2r).
\]

If $k = -1$ and $r = 1$, we have $h(0)f(-1) + f(-1) = 0$. The bracket \[[f(-1), h(0)f(-1) + f(-1)] = 2f(-1)f(-1) = 0\] implies that $f(-1)f(-1) = 0$. Consider the involution 
\[
\alpha: U(\hatsl, 1) \to U(\hatsl, 1), e(n) \mapsto -f(n), f(n) \mapsto -e(n), h(n) \mapsto -h(n), K \mapsto K.
\] This descends to an automorphism of $U(\hatsl, 1) / \langle e(-1)e(-1) \rangle$ because \[\alpha(e(-1)e(-1)) = f(-1)f(-1) = 0.\] The map $\alpha(e(n_1)e(n_2)) = f(n_1)f(n_2) = 0$ for all $n_1, n_2 \in \mathbb{Z}$ completes our list of relations in \cref{eq:relations}. Using the Lie bracket relation of $\hatsl$, we can use \cref{eq:relations} to prove \cref{eq:43relations}.
\end{proof}

\subsection{Construction of strong units} Similar to the notation for the vacuum module VOA, we will denote $a\in \sl$, $n\in \Z$, and $a(n)$ with the equivalence class  $[a_{[n]}]\in U(L_{\hatsl}(1,0))$. From  \cref{relationsforUsl2}, we can reduce \cref{leftNbhd} and \cref{rightNbhd} to
\begin{align*}
(\scrU/\NN^1_\L\scrU)_d&=\spn\{e(-d), f(-d), h(-d), h(-n)h(-m): m,n>0,\ m+n=d\},\\
(\scrU/\NN^1_\R\scrU)_{d}&=\spn\{e(d), f(d), h(d), h(n)h(m): m,n>0,\ m+n=d\}.
\end{align*}
subject to the prescribed relations in \cref{eq:relations}. First, we introduce the following lemma to help simplify some calculations. \begin{lemma}\label[lemma]{h4}
For all $a,b,c,d\in\mathbb Z$ and for any $r,s,t\in\mathbb Z$,
\begin{align*}
h(a)h(b)h(c)h(d)
&=2e(r)f(a+b+c+d-r)-2e(s)f(a+b+c+d-s)\\ &+2e(t)f(a+b+c+d-t)-h(a+b+c+d).
\end{align*}
\end{lemma}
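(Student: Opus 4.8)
The plan is to collapse the four-fold product to the stated normal form by applying the product relations of \cref{relationsforUsl2} one pair of factors at a time. Write $S=a+b+c+d$ throughout. The mechanism is the following: the quadratic relation $h(x)h(y)=2e(r)f(x+y-r)-h(x+y)$ from \eqref{eq:relations} introduces a \emph{free} integer parameter each time it is invoked, whereas the rigid relations $h(x)e(y)=e(x+y)$ and $f(x)h(y)=f(x+y)$ from \eqref{eq:43relations} carry none. Since the right-hand side of the statement involves exactly three independent parameters $r,s,t$, I expect to use the quadratic relation exactly three times, each use contributing one of the three $ef$-terms.

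First I would expand the rightmost pair via $h(c)h(d)=2e(t)f(c+d-t)-h(c+d)$, so that
\[
h(a)h(b)h(c)h(d)=2\,h(a)h(b)e(t)f(c+d-t)-h(a)h(b)h(c+d).
\]
In the first summand, two applications of $h(x)e(y)=e(x+y)$ give $h(a)h(b)e(t)=e(a+b+t)$; setting $r:=a+b+t$ (arbitrary, since $t$ is) rewrites $c+d-t$ as $S-r$, so this summand equals $2e(r)f(S-r)$.

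For the second summand I would expand $h(a)h(b)=2e(s)f(a+b-s)-h(a+b)$ and push $f(a+b-s)$ past $h(c+d)$ using $f(x)h(y)=f(x+y)$ to obtain $f(S-s)$, which leaves $h(a)h(b)h(c+d)=2e(s)f(S-s)-h(a+b)h(c+d)$. One final application of the quadratic relation, $h(a+b)h(c+d)=2e(t)f(S-t)-h(S)$, and assembling all pieces yields
\[
h(a)h(b)h(c)h(d)=2e(r)f(S-r)-2e(s)f(S-s)+2e(t)f(S-t)-h(S),
\]
which is exactly the claim. There is no essential obstacle beyond careful bookkeeping of the index shifts; the one point worth checking is that the three parameters produced by the three uses of the quadratic relation are mutually independent, and may therefore be identified with the arbitrary $r,s,t$ in the statement. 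This is immediate, since each arises as a shift of a fresh free integer and none of the rigid relations constrains it.
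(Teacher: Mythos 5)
Your proof is correct and is essentially the paper's argument: three successive applications of the quadratic relation $h(x)h(y)=2e(r)f(x+y-r)-h(x+y)$, each contributing one $ef$-term with an independent free parameter, with the leftover $h$-factors absorbed by the rigid relations. The only difference is cosmetic — you expand $h(c)h(d)$ first and use $h(x)e(y)=e(x+y)$ to absorb factors on the left, whereas the paper expands left-to-right and uses only $f(x)h(y)=f(x+y)$ — and your reparametrization $r=a+b+t$ is a valid bijection of the free parameter.
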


\begin{proof}
We repeatedly use the following relations in this proof
\[
h(x)h(y)=2e(r)f(x+y-r)-h(x+y)\quad (r\in\mathbb Z),
\qquad
f(x)h(y)=f(x+y).
\]
Starting from $h(a)h(b)h(c)h(d)$, apply the first relation to $h(a)h(b)$ with a free variable $r$:
\[
h(a)h(b)=2e(r)f(a+b-r)-h(a+b).
\]
Hence
\[
h(a)h(b)h(c)h(d)
=\bigl(2e(r)f(a+b-r)-h(a+b)\bigr)h(c)h(d).
\]
For the first term, use $f(x)h(y)=f(x+y)$ twice:
\[
2e(r)f(a+b-r)h(c)h(d)=2e(r)f(a+b+c+d-r).
\]
For the second term, reduce $h(a+b)h(c)$ by the first relation with a free variable $s$:
\[
h(a+b)h(c)=2e(s)f(a+b+c-s)-h(a+b+c).
\]
Thus
\begin{align*}
-h(a+b)h(c)h(d)
&=-2e(s)f(a+b+c-s)h(d)+h(a+b+c)h(d)
\\
&=-2e(s)f(a+b+c+d-s)+h(a+b+c)h(d).
\end{align*}
Finally, reduce $h(a+b+c)h(d)$ again with a free variable $t$:
\[
h(a+b+c)h(d)=2e(t)f(a+b+c+d-t)-h(a+b+c+d).
\]
Together, we have
\begin{align*}
h(a)h(b)h(c)h(d)
&=2e(r)f(a+b+c+d-r)-2e(s)f(a+b+c+d-s)\\ &+2e(t)f(a+b+c+d-t)
-h(a+b+c+d).
\end{align*}
\end{proof}
   \begin{theorem}\label{strongUnitThm} The simple affine VOA $V = L_{\hatsl}(1, 0)$ satisfies the strong unital property. For each $d$-th mode transition algebra $\mathfrak{A}_{d, -d}$ of $V$, the strong unit is given by
       \begin{align*}
       \mathscr{I}_d &= \frac{1}{3}e(-d) \otimes 1 \otimes f(d) + \frac{1}{3} f(-d) \otimes 1 \otimes e(d) + \frac{1}{6} h(-d) \otimes 1 \otimes h(d) \\
       &+  \sum_{n,m > 0,  n + m = d} \frac{1}{\lambda_{n,m}}h(-n)h(-m) \otimes 1 \otimes h(n) h(m) 
       \end{align*} where the coefficients are
\[
\lambda_{n,m} =
\begin{cases}
4d, & \text{if } d \text{ is even and } n=m=\dfrac{d}{2},\\[6pt]
2d, & \text{otherwise.}
\end{cases},
\]
\begin{proof}
    The proof requires checking the following two properties: 
    For all $x \otimes y \in \mathfrak{A}_{d,0}$, \[\mathscr{I}_d \star (x \otimes y) = x \otimes y\] and for all $x \otimes y \in \mathfrak{A}_{0, -d}$, \[(x \otimes y )\star  \mathscr{I}_d= x \otimes y.\] The verification of these identities  is divided into \cref{leftUnit} and \cref{rightUnit}.  One should note that the two lemmas have similar calculations due to the symmetric construction of the strong units. \cref{leftUnit} establishes that $\mathscr{I}_d$ acts as the identity on the left (in detail) and  \cref{rightUnit} establishes the action on the right. 
\end{proof}

\subsection{Proof of the strongly unital property}
\begin{lemma}\label[lemma]{leftUnit}
    For the $d$-th mode transition algebra $\mathfrak{A}_{d,-d}$ of $L_{\hatsl}(1, 0)$, 
    \[ \mathscr{I}_d \star (x \otimes y) = x \otimes y\] 
    for all spanning elements $x \otimes y \in \mathfrak{A}_{d,0}$.  
\end{lemma}

       \begin{proof}
           We check the action of the identity on each element in the spanning set for $\scrU(L_{sl_2}(1,0))$.  

           Consider $e(-d) \otimes x \in \mathfrak{A}_{d, 0}$, then 
    \begin{align*}
    &\mathscr{I}_d \star (e(-d)\otimes x) \\
    &= \frac{1}{3}e(-d)\otimes [f(d),e(-d)]\,x
     + \frac{1}{6}h(-d)\otimes [h(d),e(-d)]x \\
     &+ \sum_{n,m}\frac{1}{\lambda_{n,m}}h(-n)h(-m)\otimes [h(n)h(m),e(-d)]\,x \\
        &= \frac{1}{3}e(-d)\otimes \bigl(-h(0)+ 1\bigr)x
     + \frac{1}{6}h(-d)\otimes 2e(0)x \\
     &+ \sum_{n,m}\frac{1}{\lambda_{n,m}}h(-n)h(-m)\otimes \bigl(2e(n-d)h(m)+2h(n)e(m-d)\bigr)x \\
     &= -\frac{1}{3}e(-d)h(0) \otimes x + \frac{1}{3}e(-d) \otimes x + \frac{1}{3} h(-d)e(0) \otimes x \\
     &+  \sum_{n,m}\frac{1}{\lambda_{n,m}}h(-n)h(-m)\otimes [h(n)h(m),e(-d)]\,x
    \end{align*} The relations in \cref{eq:relations} allow us to rewrite $e(n-d)h(m) = -e(n + m-d)$ and $h(n)e(m-d) = e(n + m -d)$, which implies the summation term is equal to 0. Applying the same relations to the remaining terms allow us to reduce it further to
       \begin{align*}  
            \mathscr{I}_d \star (e(-d) \otimes x) &=  \frac{1}{3}e(-d)\otimes x+\frac{1}{3}e(-d)\otimes x+\frac{1}{3}e(-d)\otimes x \\
           &= e(-d) \otimes x
       \end{align*}
       
        Consider $f(-d) \otimes x \in \mathfrak{A}_{d, 0}$, then 
        
\begin{align*}
&\mathscr{I}_d \star (f(-d)\otimes x)\\
&= \frac{1}{3}f(-d)\otimes [e(d),f(-d)]\,x
 + \frac{1}{6}h(-d)\otimes [h(d),f(-d)]x \\
 &+ \sum_{n,m}\frac{1}{\lambda_{n,m}}\,h(-n)h(-m)\otimes [h(n)h(m),f(-d)]\,x \\
&= \frac{1}{3}f(-d)\otimes \bigl(h(0)+1\bigr)x
 - \frac{1}{3}h(-d)\otimes f(0)x \\
 &- \sum_{n,m}\frac{1}{\lambda_{n,m}}\,h(-n)h(-m)\otimes
\bigl(2f(n-d)h(m)+2h(n)f(m-d)\bigr)x \\
&= \frac{1}{3}f(-d) h(0) \otimes x + \frac{1}{3}f(-d) \otimes x - \frac{1}{3}h(-d)f(0) \otimes x \\ 
&- \sum_{n,m}\frac{1}{\lambda_{n,m}}\,h(-n)h(-m)\otimes
\bigl(2f(n-d)h(m)+2h(n)f(m-d)\bigr)x.
\end{align*}
 The relations in \cref{eq:relations} allow us to rewrite $f(n-d)h(m) = f(n+m -d)$ and $h(n)f(m-d) = -f(n+m-d)$ which implies the summation is equal to zero. We also have
        \begin{align*}
            \mathscr{I}_d \star (f(-d) \otimes x)
            &= \frac{1}{3}f(-d) \otimes x + \frac{1}{3} f(-d) \otimes x + \frac{1}{3} f(-d) \otimes x \\
            &= f(-d) \otimes x.
        \end{align*} 
        
        Consider $h(-d) \otimes x \in \mathfrak{A}_{d, 0}$, then 
        \begin{align*}
             &\mathscr{I}_d \star (h(-d) \otimes x) \\
             &= \frac{1}{3}e(-d) \otimes [f(d), h(-d)]x + \frac{1}{3}f(-d) \otimes [e(d), h(-d)]x  + \frac{1}{6} h(-d) \otimes 2x  \\
            &= \frac{2}{3} e(-d) f(0) \otimes x - \frac{2}{3}f(-d)e(0) \otimes x + \frac{1}{3}h(-d) \otimes x. 
        \end{align*} 
        From \cref{eq:relations}, then $2e(-d)f(0) = h(x)h(y) + h(-d)$ and $2f(-d)e(0) = h(x)h(y) - h(-d)$ where $x + y = -d$. This allows us to reduce the expression to  
        \begin{align*}
            &\mathscr{I}_d \star (h(-d) \otimes x) \\
            &= \frac{1}{3}(h(x)h(y) + h(-d)) \otimes x - \frac{1}{3}(h(x)h(y) - h(-d))\otimes x   + \frac{1}{3} h(-d) \otimes x\\
            &= \frac{2}{3} h(-d) \otimes x + \frac{1}{3} h(-d) \otimes x \\
            &= h(-d) \otimes x
        \end{align*}

        Consider $h(-r)h(-s) \otimes x \in \mathfrak{A}_{d,0}$ where $r,s$ are positive integers such that $r + s = d$. We only need to consider the case where $r,s > 0$ because if $s = 0$, then $h(-r)h(0) \otimes x = h(-r) \otimes h(0)x$ which reduces to the previous case. 
        \begin{align*}
             &\mathscr{I}_d \star (h(-r)h(-s) \otimes x) \\
             &= \frac{1}{3} e(-d) \otimes [f(d), h(-r)h(-s)]x + \frac{1}{3} f(-d) \otimes [e(d), h(-r)h(-s)]x \\ &+  \sum \frac{1}{\lambda_{n,m}}h(-n)h(-m) \otimes [h(n)h(m), h(-r)h(-s)]x \\
            & = \frac{2}{3}e(-d) \otimes (f(d-r)h(-s) + h(-r)f(d-s))x  \\
            &-\frac{2}{3} f(-d) \otimes (e(d-r)h(-s) + h(-r)e(d-s))x \\
            &+  \sum \frac{1}{\lambda_{n,m}}h(-n)h(-m) \otimes [h(n)h(m), h(-r)h(-s)]x.
        \end{align*} 
        Since $2f(d-r)h(-s) = 2f(d - r -s)$ and $2h(-r)f(d-s) = -2f(d - r -s)$ and similarly with the $e(\cdot)h(\cdot)$ terms, then those two terms are equal to zero and we can simply the expression to 
        \begin{align*}
            \mathscr{I}_d \star (h(-r)h(-s) \otimes x) &=  \sum \frac{1}{\lambda_{n,m}}h(-n)h(-m) \otimes [h(n)h(m), h(-r)h(-s)]x.
        \end{align*} It is straightforward to verify the following Lie bracket:
        \begin{align*}
        [h(n)h(m),\,h(-r)h(-s)]
        &= 2(m\delta_{m,r}h(n)h(-s)
        + m\delta_{m,s}h(n)h(-r) \\
        &+ n\delta_{n,r}h(-s)h(m)
        + n\delta_{n,s}h(-r)h(m)).
        \end{align*} 
        
        Assume $n \geq m$ and $r \geq s$. If $m = r$ then $n \geq m \geq s$. Since $n + m = r + s$, this implies  $n = s$. It follows that $n = m = s = r$. The same argument would apply for the case when $n = s$. If we have cases $m = s$ or $n = r$, then either case would imply that $n = r$ and $m = s$.  There are two cases in which the Lie bracket is nonzero: (1) $n = m = r =s$ (2) $n =r$ and $m = s$ and $n \neq m$. Consider the first case where $n = m = r = s$, then 
        \begin{align*}
            \mathscr{I}_d \star (h(-r)h(-r) \otimes x) &=   \frac{1}{\lambda_{r,r}}8r\cdot  h(-r)h(-r)h(r)h(-r) \otimes x.
        \end{align*} Applying  \cref{h4} by letting all the free variables be equal to $0$, then
        \begin{align*}
        h(-r)^2h(r)h(-r) &= 2e(0)f(-2r) - 2e(0)f(-2r) + 2e(0)f(-2r) - h(-2r) \\
        &= 2e(0)f(-2r) - h(-2r) \\
        &= h(-r)h(-r)
        \end{align*} and so 
        \begin{align*}
        \mathscr{I}_d \star (h(-r)h(-r) \otimes x) &= \frac8{\lambda_{r,r}} r h(-r)h(-r) \otimes x.
        \end{align*}
        
        Consider the second case where $n = r$ and $m = s$, then the Lie bracket relation simplifies to 
        \[
        [h(n)h(m), h(-r)h(-s)] = [h(r)h(s), h(-r)h(-s)] = 2(r h(-s)h(s) + sh(r)h(-r))
        \] and we can write 
        \begin{align*}
            &\mathscr{I}_d \star (h(-r)h(-s) \otimes x) \\
            &=  \frac{2}{\lambda_{r,s}}h(-r)h(-s) \otimes(r h(-s)h(s) + sh(r)h(-r))x \\
            &= \frac{2r}{\lambda_{r,s}} h(-r)h(-s)h(-s)h(s) \otimes x +  \frac{2s}{\lambda_{r,s}} h(-r)h(-s)h(r)h(-r) \otimes x \\
            &= \frac{2r}{\lambda_{r,s}} h(-s)h(-s)h(-r)h(s) \otimes x +  \frac{2s}{\lambda_{r,s}} h(-r)h(-s)h(r)h(-r) \otimes x.
        \end{align*} From \cref{h4}, we obtain
\[
h(-s)^2h(-r)h(s)
= 2e(0)f(-(s+r)) - h(-(s+r)) = h(-r)^2h(-s)h(r).
\] By the relations in \cref{eq:relations}, we know $e(0)f(-(r+s)) = h(-r)h(-s) + h(-(r+s))$ and so 
\[
h(-s)^2h(-r)h(s)
= h(-r)^2h(-s)h(r) = h(-r)h(-s).
\] Our expression in the second case is 
\begin{align*}
    \mathscr{I}_d \star (h(-r)h(-s) \otimes x) &= \frac{2(r+s)}{\lambda_{r,s}} h(-r)h(-s) \otimes x.
\end{align*} 

Finally, we verify the following: if $r = s = \frac{d}{2}$, 
\[
 \mathscr{I}_d \star (h(-r)h(-r) \otimes x) = \frac{8r}{\lambda_{r,r}} h(-r)h(-r) \otimes x = \frac{8}{4d} \frac{d}{2} h(-r)h(-r) \otimes x = h(-r)h(-r) \otimes x
\] and if $r \neq s$,
\[
 \mathscr{I}_d \star (h(-r)h(-s) \otimes x) = \frac{2(r+s)}{\lambda_{r,s}}h(-r)h(-s) \otimes x = \frac{2d}{2d} h(-r)h(-s) \otimes x = h(-r)h(-s) \otimes x.
\]
\end{proof}
       
   \end{theorem}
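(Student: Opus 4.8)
The plan is to deduce \cref{rightUnit} from \cref{leftUnit} by exhibiting a symmetry of $\mathfrak{A}_d$ that interchanges the two one–sided module pieces and fixes $\mathscr{I}_d$, rather than repeating the four case computations by hand. Consider the Chevalley anti–involution $\theta$ of $\hatsl$ determined by
\[
\theta(e(n)) = f(-n),\quad \theta(f(n)) = e(-n),\quad \theta(h(n)) = h(-n),\quad \theta(K)=K,
\]
extended to an anti–automorphism of $\widetilde{U}(\hatsl,1)$. Since $\theta(e(-1)e(-1)) = f(1)f(1)$ lies in $\langle e(-1)e(-1)\rangle$ by the relation $f(x)f(y)=0$ of \cref{relationsforUsl2}, the map $\theta$ descends to a continuous anti–automorphism of $\widetilde{U}(\hatsl,1)/\overline{\langle e(-1)e(-1)\rangle}$, hence of $\scrU=\scrU(L_{\hatsl}(1,0))$ via \cref{Uofsl2}. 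Because $\theta$ sends creation modes to annihilation modes it negates degree, $\theta(\scrU_d)=\scrU_{-d}$, and swaps the neighborhoods $\NN^1_\L\scrU\leftrightarrow\NN^1_\R\scrU$; thus it carries $(\scrU/\NN^1_\L\scrU)_d$ to $(\scrU/\NN^1_\R\scrU)_{-d}$ and, since $\NN^1_\L\scrU_0=\NN^1_\R\scrU_0$, descends to an anti–automorphism of $\A$ commuting with the projection $\scrU_0\to\A$.

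First I would promote $\theta$ to a map $\Theta$ on $\mathfrak{A}$ by $\Theta(\alpha\otimes x\otimes\beta)=\theta(\beta)\otimes\theta(x)\otimes\theta(\alpha)$, which sends $\mathfrak{A}_{n,-m}$ to $\mathfrak{A}_{m,-n}$; in particular $\Theta$ preserves $\mathfrak{A}_d$ and swaps $\mathfrak{A}_{d,0}\leftrightarrow\mathfrak{A}_{0,-d}$. Using that $\theta$ commutes with the projection onto $\A$, one checks the identity $\theta(\beta\circledast\alpha)=\theta(\alpha)\circledast\theta(\beta)$, which upgrades $\Theta$ to an anti–homomorphism for the mode product: $\Theta(\mathfrak{a}\star\mathfrak{b})=\Theta(\mathfrak{b})\star\Theta(\mathfrak{a})$. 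Next I would verify term by term that $\mathscr{I}_d$ is $\Theta$–fixed: the three rank–one terms are manifestly symmetric, e.g.\ $\Theta(e(-d)\otimes 1\otimes f(d))=\theta(f(d))\otimes 1\otimes\theta(e(-d))=e(-d)\otimes 1\otimes f(d)$, while for $h(-n)h(-m)\otimes 1\otimes h(n)h(m)$ one uses that distinct $h$–modes commute (the central term $m\delta_{m+n,0}$ vanishes since $m+n=d>0$) together with the symmetry of $\lambda_{n,m}$ in $n,m$. Granting $\Theta(\mathscr{I}_d)=\mathscr{I}_d$, I apply $\Theta$ to the identity $\mathscr{I}_d\star\mathfrak{a}=\mathfrak{a}$ of \cref{leftUnit} to get $\Theta(\mathfrak{a})\star\mathscr{I}_d=\Theta(\mathfrak{a})$; as $\mathfrak{a}$ runs over $\mathfrak{A}_{d,0}$, its image $\Theta(\mathfrak{a})$ runs over all of $\mathfrak{A}_{0,-d}$, which is exactly the assertion of \cref{rightUnit}.

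The main obstacle in this route is the bookkeeping that makes $\Theta$ a genuine anti–automorphism of $\mathfrak{A}_d$ compatible with $\circledast$: one must confirm that $\theta$ preserves the relevant neighborhood filtrations (so that it descends both to $\A$ and to $\scrU/\NN^1_\bullet\scrU$) and that the tensor products balanced over $\scrU_0$ are respected, so that $\Theta$ is well defined on equivalence classes.

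If instead a self–contained argument is preferred, I would mirror the proof of \cref{leftUnit} verbatim: expand $(1\otimes\gamma(d))\star\mathscr{I}_d$ for each spanning element $\gamma(d)\in\{e(d),f(d),h(d),h(n)h(m)\}$, reduce every $\circledast$ to the zero–mode commutator $[\gamma(d),\,\cdot\,]$, and simplify using the transposed relations $h(x)e(y)=e(x+y)$, $f(x)h(y)=f(x+y)$, and $h(x)h(y)-h(x+y)=2f(x+y-r)e(r)$ of \eqref{eq:43relations} in place of \eqref{eq:relations}. In this direct route the hard step is again the quartic element $\gamma(d)=h(n)h(m)$, where one reduces products of four $h$–modes via \cref{h4} and splits into the diagonal case $n=m=d/2$ and the off–diagonal case $n\neq m$, checking that the coefficients $1/\lambda_{n,m}$ reproduce $1\otimes h(n)h(m)$ — precisely the computation already carried out on the left.
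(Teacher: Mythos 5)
Your proposal only addresses half of the theorem. Both routes you describe --- the anti-involution argument and the ``self-contained'' fallback --- are devices for obtaining \cref{rightUnit} once \cref{leftUnit} is known, but you never prove \cref{leftUnit} itself. That lemma is where the actual content of the theorem lives: the case-by-case expansion of $\mathscr{I}_d\star(x\otimes y)$ for $x\in\{e(-d),f(-d),h(-d),h(-r)h(-s)\}$ using the relations of \cref{relationsforUsl2}, the reduction of quartic $h$-products via \cref{h4}, and the verification that the specific coefficients $\tfrac13,\tfrac16,\tfrac1{\lambda_{n,m}}$ (including the factor-of-two discrepancy between the diagonal case $n=m=d/2$ and the off-diagonal case) make everything sum to $x\otimes y$. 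Without that computation nothing has been verified, so as a proof of \cref{strongUnitThm} the proposal has a genuine gap.

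That said, your symmetry argument for the right-unit half is correct and is genuinely different from the paper, which simply redoes the four cases by a second direct computation in \cref{rightUnit}. The Chevalley anti-involution $\theta(e(n))=f(-n)$, $\theta(f(n))=e(-n)$, $\theta(h(n))=h(-n)$ does negate the degree $\deg a(n)=-n$, hence sends $\NN^n_\L\scrU$ to $\NN^n_\R\scrU$ and preserves $\NN^1_\L\scrU_0=\NN^1_\R\scrU_0$; it preserves the ideal $\overline{\langle e(-1)e(-1)\rangle}$ since $\theta(e(-1)e(-1))=f(1)f(1)$, which vanishes in the quotient by \cref{relationsforUsl2}; and the identity $\theta(\beta\circledast\alpha)=\theta(\alpha)\circledast\theta(\beta)$ makes $\Theta$ an anti-homomorphism for $\star$. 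Together with $\Theta(\mathscr{I}_d)=\mathscr{I}_d$ (which uses that the $h$-modes in each quartic term commute, as $n+m=d>0$ kills the central term, and that $\lambda_{n,m}=\lambda_{m,n}$), this correctly yields $\mathfrak{b}\star\mathscr{I}_d=\mathfrak{b}$ for all $\mathfrak{b}\in\mathfrak{A}_{0,-d}$ from the left-unit identity. This would halve the computation in the paper and make manifest why the same coefficients work on both sides; but it is a supplement to, not a substitute for, the verification in \cref{leftUnit}.
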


\begin{lemma}\label[lemma]{rightUnit}
    For the $d$-th mode transition algebra $\mathfrak{A}_{d,-d}$ of $L_{\hatsl}(1, 0)$, \[(x \otimes y) \star  \mathscr{I}_d = x \otimes y\] for all spanning elements $x \otimes y \in \mathfrak{A}_{0, -d}$. 
\end{lemma}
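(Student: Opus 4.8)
The plan is to mirror \cref{leftUnit}, checking $(x\otimes y)\star\mathscr{I}_d=x\otimes y$ on each of the four types of spanning element $x\otimes b(d)\in\mathfrak{A}_{0,-d}$, where $b(d)\in\{e(d),f(d),h(d),h(r)h(s):r+s=d,\ r,s>0\}$. By the product rule \cref{eq:modeproduct}, a single term $\alpha(-d)\otimes 1\otimes\beta(d)$ of $\mathscr{I}_d$ contributes $(x\otimes b(d))\star(\alpha(-d)\otimes 1\otimes\beta(d))=x\,(b(d)\circledast\alpha(-d))\otimes\beta(d)$, and since $\deg b(d)=-d<0$ we have $b(d)\circledast\alpha(-d)=\overline{[b(d),\alpha(-d)]}$ by \cref{eq:star}. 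This is exactly the transpose of the left-unit situation: the contraction now multiplies $x$ on the right and the surviving outer factor is $\beta(d)$ rather than $\alpha(-d)$. After evaluating each bracket I would move the resulting degree-zero element of $\A$ across the tensor to act on $\beta(d)$, using the relations of \cref{eq:relations} such as $h(0)e(d)=e(d)$, $e(0)h(d)=-e(d)$, $h(0)f(d)=-f(d)$, and $f(0)h(d)=f(d)$.

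First I would handle $b(d)\in\{e(d),f(d)\}$. For $b(d)=e(d)$ the brackets $[e(d),e(-d)]=0$, $[e(d),f(-d)]=h(0)+1$, $[e(d),h(-d)]=-2e(0)$ leave only the $f(-d)$ and $h(-d)$ terms of $\mathscr{I}_d$, since the contraction against $h(-n)h(-m)$ vanishes: the two summands $-2e(d-n)h(-m)$ and $-2h(-n)e(d-m)$ reduce to $2e(0)$ and $-2e(0)$ by \cref{eq:relations} and cancel, exactly as in \cref{leftUnit}. The surviving contributions $\tfrac13 x\otimes(h(0)+1)e(d)$ and $-\tfrac13 x\otimes e(0)h(d)$ then recombine to $x\otimes e(d)$, and the case $f(d)$ is symmetric. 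For $b(d)=h(d)$ the $h(-n)h(-m)$ contraction again vanishes (as $[h(d),h(-n)]$ and $[h(d),h(-m)]$ need $d\in\{n,m\}$, impossible since $n,m<d$), and using $2e(0)f(d)=h(x')h(y')+h(d)$ together with $2f(0)e(d)=h(x'')h(y'')-h(d)$ from \cref{eq:relations} I would cancel the length-two $h$-terms against each other and be left with $x\otimes h(d)$, mirroring the $h(-d)\otimes x$ computation.

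The main obstacle is $b(d)=h(r)h(s)$. I would first verify that the $e(-d),f(-d),h(-d)$ terms of $\mathscr{I}_d$ drop out: $[h(r)h(s),e(-d)]=2h(r)e(s-d)+2e(r-d)h(s)=2e(0)-2e(0)=0$ by \cref{eq:relations}, likewise for $f(-d)$, while $[h(r)h(s),h(-d)]=0$ since $\delta_{r,d}=\delta_{s,d}=0$. What remains is $\sum_{n,m}\tfrac{1}{\lambda_{n,m}}\,x\,\overline{[h(r)h(s),h(-n)h(-m)]}\otimes h(n)h(m)$, where $[h(r)h(s),h(-n)h(-m)]=2s\delta_{s,n}h(r)h(-m)+2s\delta_{s,m}h(r)h(-n)+2r\delta_{r,n}h(-m)h(s)+2r\delta_{r,m}h(-n)h(s)$. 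As in \cref{leftUnit}, with $n\ge m$ and $r\ge s$ this is nonzero only in case (1) $n=m=r=s$ and case (2) $n=r$, $m=s$, $n\neq m$; in each case the degree-zero bracket, acting on $h(n)h(m)$, produces a length-four monomial in $h$ that I reduce by \cref{h4} (taking the free variables to be $0$) down to the single spanning element $h(r)h(s)$, and the split definition of $\lambda_{n,m}$ ($4d$ when $d$ is even and $n=m=d/2$, versus $2d$ otherwise) is precisely calibrated so the surviving scalar equals $1$. The delicate points are the noncommutativity $h(r)h(-r)-h(-r)h(r)=2r$ in case (1), which forces the two coincident $\delta$-terms to be combined with care, and the factor-of-two discrepancy in the multiplicity of the diagonal term $n=m$, which is exactly what the case split in $\lambda_{n,m}$ absorbs; I would treat both precisely as in the two sub-cases closing \cref{leftUnit}.

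Finally, as an independent cross-check, I note that the proposed $\mathscr{I}_d$ is invariant under the flip of its left and right tensor factors combined with the involution $e\leftrightarrow f$, $h\mapsto h$ of \cref{relationsforUsl2}. If this flip is realized as an anti-automorphism of $\mathfrak{A}_d$ interchanging $\mathfrak{A}_{d,0}$ with $\mathfrak{A}_{0,-d}$ and swapping the two halves of \cref{eq:defstrongunit}, then \cref{rightUnit} would follow formally from \cref{leftUnit}; I would keep this symmetry in reserve to confirm the direct computation rather than as the primary argument.
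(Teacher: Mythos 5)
Your proposal follows essentially the same route as the paper's proof: a case-by-case check on the four types of spanning elements of $\mathfrak{A}_{0,-d}$, computing the degree-zero brackets via $\circledast$, cancelling the mixed $e$--$h$ and $f$--$h$ contributions using \cref{eq:relations}, and handling the $h(r)h(s)$ case by the same delta-function bracket formula, the same two-case split ($n=m=r=s$ versus $n=r$, $m=s$, $n\neq m$), and the reduction of length-four $h$-monomials via \cref{h4}, with the split definition of $\lambda_{n,m}$ absorbing the diagonal multiplicity. The symmetry cross-check you mention is a nice sanity check but, as you note, the direct computation is the argument, and it matches the paper's.
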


\begin{proof}
    Consider the element $x \otimes e(d) \in \mathfrak{A}_{0, -d}$, then 
    \begin{align*}
        &(x \otimes e(d)) \star  \mathscr{I}_d \\
        &=  \frac{1}{3} x[e(d), f(-d)] \otimes e(d) + \frac{1}{6} x[e(d), h(-d)] \otimes h(d)  \\
        &+ \sum \frac{1}{\lambda_{n,m}} x [e(d), h(-n)h(-m)] \otimes h(n)h(m) \\
        &= \frac{1}{3} x \otimes (h(0)+1)e(d) - \frac{1}{3} x \otimes e(0)h(d)\\
        &+ \sum \frac{1}{\lambda_{n,m}}x(-2e(d-n)h(-m) - 2h(-n)e(d-m)) \otimes h(n)h(m) \\
        &= \frac{1}{3}x \otimes e(d) + \frac{1}{3} x \otimes e(d) + \frac{1}{3} x \otimes e(d) \\
        &+ \sum \frac{1}{\lambda_{n,m}} x(2(d - n - m) - 2e(d -n -m)) \otimes h(n)h(m) \\
        &= x \otimes e(d).
    \end{align*}
    \noindent
    Consider $x \otimes f(d) \in \mathfrak{A}_{0, -d}$, then
    \begin{align*}
        &(x \otimes f(d)) \star  \mathscr{I}_d \\
        &= \frac{1}{3}x[f(d), e(-d)] \otimes f(d) + \frac{1}{6} x[f(d), h(-d)] \otimes h(d) \\
        &+ \sum \frac{1}{\lambda_{n,m}}x[f(d), h(-n) h(-m)] \otimes h(n)h(m) \\
        &= -\frac{1}{3} x \otimes (h(0)+1)f(d) + \frac{1}{3} x \otimes f(0)h(d)\\ &+ \sum \frac{1}{\lambda_{n,m}}x (2f(d-n)h(-m) + 2h(-n)f(d-m)) \otimes h(n)h(m)\\
        &= \frac{1}{3}x \otimes f(d) + \frac{1}{3}x \otimes f(d) + \frac{1}{3} x \otimes f(d) \\
        &= x \otimes f(d).
    \end{align*}

    \noindent Consider $x \otimes h(d) \in \mathfrak{A}_{0, -d}$, then
    \begin{align*}
        &(x \otimes h(d)) \star  \mathscr{I}_d \\
        &= \frac{1}{3}x[h(d), e(-d)] \otimes f(d) + \frac{1}{3}x[h(d), f(-d)] \otimes e(d) + \frac{1}{6}x[h(d), h(-d)] \otimes h(d) \\
        &= \frac{2}{3}x\otimes e(0)f(d) - \frac{2}{3}x \otimes f(0) e(d) + \frac{1}{3} x \otimes h(d) \\
        &= \frac{1}{3}x \otimes h(-d) + \frac{1}{3}x \otimes h(-d) + \frac{1}{3}x\otimes h(-d) \\
        &= x \otimes h(-d).
    \end{align*}
    
    \noindent Consider $x \otimes h(r)h(s)$, then 
    \begin{align*}
        &(x \otimes h(r)h(s)) \star \mathscr{I}_d  \\
        &= \frac{1}{3} x[h(r)h(s), e(-d)] \otimes f(d) + \frac{1}{3} x[h(r)h(s), f(-d)] \otimes e(d) \\
        &+ \sum \frac{1}{\lambda_{n,m}}x[h(-r)h(-s), h(-n)h(-m)] \otimes h(n)h(m) \\
        &= \frac{2}{3}x(e(r -d)h(s) + h(r) e(s-d)) \otimes f(d) - \frac{2}{3}x(f(r-d)h(s) + h(r) f(s-d)) \otimes e(d) \\
        &+ \sum \frac{1}{\lambda_{n,m}}x[h(r)h(s), h(-n)h(-m)] \otimes h(n)h(m).
    \end{align*} The first two terms cancel by the relations in \cref{eq:relations}, leaving only the summation term. It is straight-forward to verify  
    \begin{align*}
[h(r)h(s),h(-n)h(-m)]
&= 2(s\delta_{s,n}h(r)h(-m)
+ s\delta_{s,m}h(r)h(-n)
\\ &+ r\delta_{r,n}h(-m)h(s)
+ r\delta_{r,m}h(-n)h(s)).
\end{align*} A similar argument as the previous lemma allows us to consider only two cases: $(1)$ $r = s = d/2$ and $(2)$ $r \neq s$. If $r = s = d/2$, then

\begin{align*}
    (x \otimes h(r)h(r)) \star \mathscr{I}_d &= \frac{1}{\lambda_{r,r}} x(8 r h(r)h(-r)) \otimes h(r)h(r) \\
    &= \frac{8r}{\lambda_{r,r}} x \otimes h(r)h(-r)h(r)h(r) .
\end{align*} By  \cref{h4}, this is equal to 
\[
(x \otimes h(r)h(r))  \star \mathscr{I}_d = \frac{8r}{\lambda_{r,r}} x \otimes h(r)h(r) = \frac{8r}{4d} x \otimes h(r)h(r) = x \otimes h(r)h(r).
\] If $r \neq s$, then 
\begin{align*}
    (x \otimes h(r)h(s)) \star \mathscr{I}_d &= \frac{2}{\lambda_{r,s}} x (sh(-r)h(r) + rh(s)h(-s)) \otimes h(r)h(s) \\
    &= \frac{2s}{\lambda_{r,s}} x \otimes h(-r)h(r)h(r)h(s) + \frac{2r}{\lambda_{r,s}} x  \otimes h(s)h(-s)h(r)h(s) \\
    &= \frac{2(r+s)}{\lambda_{r,s}} x \otimes h(r)h(s) \\
    &= \frac{2d}{2d}x \otimes h(r)h(s) \\
    &= x \otimes h(r)h(s).
\end{align*}
\end{proof}

\end{document}